\theoremstyle{plain}
\newtheorem{theorem}{Theorem}
\newtheorem{lemma}{Lemma}
\newtheorem{proposition}{Proposition}
\theoremstyle{definition}
\theoremstyle{remark}
\newtheorem{remark}{Remark}
\begin{document}
\title
[Asymptotic estimates of holomorphic sections]
{Asymptotic estimates of holomorphic sections on Bohr-Sommerfeld Lagrangian submanifolds
\footnote{2020 Mathematics Subject Classification. Primary 32Q15; Secondary 53D12, 53D50.
}}
\author{Yusaku Tiba}
\date{}

\begin{abstract}
Let $M$ be a complex manifold and 
$L$ be a line bundle over $M$ with a Hermitian metric $h$ whose Chern form is a K\"ahler form $\omega$.  
Let $X \subset M$ be a compact Lagrangian submanifold of $(M, \omega)$.  
When $X$ satisfies the Bohr-Sommerfeld condition, 
we give an asymptotic estimate of the norm $|f|_{h^k}$ on $X$ for $f \in H^0(M, L^k)$.  
\end{abstract}

\keywords{K\"ahler manifold, holomorphic prequantum line bundle, Bohr-Sommerfeld Lagrangian submanifold}
%\footnotetext{2020 Mathematics Subject Classification. Primary 32Q15; Secondary 53D12, 53D50.}
\maketitle

%\subjclass[2020]{ 32Q15, 53D12, 53D50}
%\keywords{{\bf Keywords.}Pseudoconvex domain \and Plurisubharmonic function}

%\classification{32U10, 32L10}

\section{Introduction}\label{section:1}
Throughout this paper $M$ will be a K\"ahler manifold of dimension $n$ equipped with a K\"ahler form $\omega$ and a complex structure $J \in \mathrm{End}(TM)$.   
The bundle $L$ is a holomorphic prequantum line bundle over $M$.  
That is, 
$L$ is a holomorphic line bundle with a Hermitian metric $h$ such that the Chern form $c_{1}(L, h)$ associated with $h$ equals $\omega$.  
We denote by $\nabla$ the Chern connection of $(L, h)$.  
We consider the $k$-th tensor power $L^k$ of $L$.   
Let $f, g$ be sections of $L^k$. 
We denote by $\langle f, g \rangle_{h^k}$ the pointwise scalar product and 
define the integral product
$
(f, g)_{h^k} = \int_{M} \langle f, g \rangle_{h^k} \omega_{n}
$
where $\omega_{n} = \omega^n/n!$.  
We write $|f|^2_{h^k} = \langle f, f \rangle_{h^k}$ and $\|f\|^2_{h^k} = (f, f)_{h^k}$.  
Let $L^2(M, L^k)$ be the Hilbert space of square integrable sections of $L^k$.  
We define $H^{0}_{(2)}(M, L^k) = H^{0}(M, L^k) \cap L^{2}(M, L^k)$.  
This space is regarded as the quantum phase space of $X$ with the Planck constant $h=1/k$.  
Letting $k$ tend to infinity corresponds to letting $h$ tend to 0, which is referred to as the \textit{semiclassical limit}.
The asymptotic results as $k \to \infty$ expected to recover the laws of classical mechanics.  
The Bergman kernel $K_{k}(x, y)$ of $H_{(2)}^0(M, L^k)$ is the reproducing kernel for the Hilbert space $H^{0}_{(2)}(M, L^k)$, that is, $f(x) = (f(\cdot), K(x, \cdot))_{h^k}$ for any $f \in H^0_{(2)}(M, L^k)$ and $x \in M$.  
It is a well known property that the Bergman kernel function $B_{k}(x) = |K_{k}(x, x)|_{h^k}$ is characterized by 
\[
B_{k}(x) = \sup_{f \in H^0_{(2)}(M, L^k),\, f \neq 0} \frac{|f(x)|^2_{h^k}}{\|f\|^2_{h^k}}.  
\]
The asymptotic behavior, as $k \to +\infty$ of the Bergman kernel function is studied in detail, and the asymptotic series expansion formula of $B_{k}(x)$ is proved when $M$ is projective 
(see \cite{Tia}, \cite{Cat}, \cite{Zel}).  
Berndtsson~\cite{Ber} gave a simple proof for the leading order term 
\begin{equation}\label{equation:1}
B_{k}(x) \sim k^n \quad (k \to +\infty).  
\end{equation}
In this paper, we estimate holomorphic sections in $H^0_{(2)} (M, L^k)$ on a Bohr-Sommerfeld Lagrangian submanifold and provide an analogous result of (\ref{equation:1}).  

Let $X \subset M$ be a Lagrangian submanifold of $(M, \omega)$, that is, $X$ is a real $n$-dimensional submanifold of $M$ such that $\iota^{*}\omega = 0$.  
Here $\iota: X \to M$ is the inclusion map.  
Let $\nabla^{X}$ be the connection induced by $\nabla$ on $\iota^{*}L$.  
Then $(\iota^{*}L, \nabla^{X})$ is flat since $\iota^{*}\omega = 0$.  
We say that $(X, \nabla^{X})$ satisfies the Bohr-Sommerfeld condition if there exists a non-vanishing smooth section $\zeta \in C^{\infty}(X, \iota^{*}L)$ satisfying $\nabla^{X} \zeta = 0$ (cf.~\cite{Ioo}, \cite{Ver}).  
Hence $(X, \nabla^{X})$ satisfies the Bohr-Sommerfeld condition if and only if the holonomy of $(\iota^{*}L, \nabla^{X})$ is trivial.  
If we take $\zeta$ as $|\zeta|_{h} = 1$ on $X$, we call the data of $(X, \zeta)$ the Bohr-Sommerfeld Lagrangian submanifold.  
Let $d\mu_{X}$ be the Riemannian density induced by $\omega$ on $X$.  %
We define $\mathrm{Vol}(X, \omega) = \int_{X} d\mu_{X}$. 
The holomorphic section obtained from
the Bergman projection of the distribution $\zeta^k d\mu_{X}$ is regarded as the quantization of $X$.   The asymptotic behaviour of these sections as $k \to \infty$ has been extensively studied (cf. \cite{Bor-Pau-Uri}, \cite{Deb-Pao}, \cite{Ioo}).    
Our first result provides an asymptotic estimate that holds not only for such special holomorphic sections but also for any holomorphic section.
\begin{theorem}\label{theorem:1}
Let $X \subset M$ be a compact Lagrangian submanifold of $(M, \omega)$.  
Assume that $(X, \nabla^{X})$ satisfies the Bohr-Sommerfeld condition.  
Then 
\begin{equation}\label{equation:2}
\limsup_{k \to +\infty}\left(\frac{\mathrm{Vol}(X, \omega)}{(2k)^{n/2}} \sup_{f \in H^{0}_{(2)}(M, L^k), \, f \neq 0} \frac{\inf_{x \in X} |f(x)|^2_{h^k}}{\|f\|^2_{h^k}} \right) \leq 1.  
\end{equation}
\end{theorem}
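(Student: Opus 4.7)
The first idea is to bypass the infimum by averaging: since $\inf_{x\in X}|f(x)|^2_{h^k}\le\mathrm{Vol}(X,\omega)^{-1}\int_X|f|^2_{h^k}\,d\mathrm{vol}_X$, Theorem \ref{theorem:1} reduces to the trace-type estimate
\begin{equation}\label{eq:planT}
\int_X|f|^2_{h^k}\,d\mathrm{vol}_X\le (2k)^{n/2}(1+o(1))\,\|f\|^2_{h^k},\qquad k\to\infty,
\end{equation}
uniformly in $f\in H^0_{(2)}(M,L^k)$. To obtain \eqref{eq:planT} I will localize. Cover $X$ by finitely many charts $U_\alpha\subset M$ with holomorphic coordinates $z^\alpha=x^\alpha+iy^\alpha$ such that $X\cap U_\alpha=\{y^\alpha=0\}$; this is possible because Lagrangians in a K\"ahler manifold are totally real, so adapted coordinates exist locally. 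On each $U_\alpha$ the Bohr--Sommerfeld flat section $\zeta|_{X\cap U_\alpha}$ extends to a nowhere-vanishing local holomorphic section $\tilde\zeta_\alpha$ of $L$; trivializing by $\tilde\zeta_\alpha$ yields $h=e^{-\phi_\alpha}$ with the K\"ahler potential satisfying $\phi_\alpha|_X=0$ and $d\phi_\alpha|_X=0$. Taylor expanding in $y$ and using the curvature identity relating $i\partial\bar\partial\phi_\alpha$ to $\omega$ then produces the quadratic normal form $\phi_\alpha(x,y)=2\pi\langle g(x)y,y\rangle+O(|y|^3)$, where $g$ is the induced Riemannian metric on $T_xX$.

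The core computation lives in the model $(\mathbb{C}^n,\phi=2\pi|y|^2)$, where Plancherel in the real variable $x$ yields the Paley--Wiener identity
\[
\int_{\mathbb{R}^n}|f(x+iy)|^2\,dx=\int_{\mathbb{R}^n}|\hat f(\xi)|^2 e^{-4\pi\xi\cdot y}\,d\xi,
\]
whose right-hand side is a positive superposition of exponentials, hence a log-convex function of $y$. Integrating both sides against the Gaussian $e^{-2\pi k|y|^2}$ and using the elementary identity
\[
\int_{\mathbb{R}^n} e^{-4\pi\xi\cdot y-2\pi k|y|^2}\,dy=(2k)^{-n/2}e^{2\pi|\xi|^2/k}\ge (2k)^{-n/2}
\]
produces, in the model,
\[
\int_{\mathbb{R}^n}|f(x)|^2\,dx\le (2k)^{n/2}\int_{\mathbb{C}^n}|f(z)|^2 e^{-2\pi k|y|^2}\,dx\,dy,
\]
with $(2k)^{n/2}$ attained asymptotically by $\hat f$ concentrated near $\xi=0$, so the constant is sharp.

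The main obstacle will be transplanting this model estimate to the true geometric setting and absorbing every correction into the $(1+o(1))$ factor of \eqref{eq:planT}. Three distinct error sources must be handled: the cubic discrepancy $\phi_\alpha-2\pi\langle g(x)y,y\rangle=O(|y|^3)$ on each chart; the failure of Plancherel to apply globally (sections are only holomorphic on $U_\alpha$, not on a full tube $\mathbb{R}^n+iU$); and the mismatch between $d\mathrm{vol}_X$ and the transverse slice of $\omega_n$ away from normal coordinates. I expect to tame all three by confining the transverse integration to a shrinking tube $|y|\le k^{-1/2+\delta}$ with a small $\delta\in(0,1/6)$: on such a tube $k$ times the cubic error is $o(1)$, so $e^{-k\phi_\alpha}$ and $e^{-2\pi k\langle g(x)y,y\rangle}$ are comparable up to $(1+o(1))$, whereas the Gaussian mass outside is $O(e^{-2\pi k^{2\delta}})$ and hence exponentially negligible. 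Summing the chartwise model estimates via a partition of unity $\{\chi_\alpha\}$ on $X$ and compatible extensions $\tilde\chi_\alpha$ on $M$ with $\sum\tilde\chi_\alpha\le 1$ then yields \eqref{eq:planT} and completes the proof.
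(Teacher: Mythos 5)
Your reduction from $\inf_X$ to the average $\mathrm{Vol}(X,\omega)^{-1}\int_X|f|^2_{h^k}\,d\mathrm{vol}_X$ is fine, and the model computation in $(\mathbb{C}^n,\ e^{-2\pi k|y|^2})$ via Plancherel is correct and does give the sharp constant $(2k)^{n/2}$. But there are two genuine gaps in passing from the model to the theorem, and both are precisely what the paper's Sections~\ref{section:4} and~\ref{section:5} are built to circumvent.

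The most serious gap is your opening claim that $X$ can be covered by holomorphic charts in which $X\cap U_\alpha=\{y^\alpha=0\}$, together with the claim that $\zeta|_{X\cap U_\alpha}$ extends to a local \emph{holomorphic} frame $\tilde\zeta_\alpha$. Neither holds for a general $C^\infty$ Lagrangian, which is the setting of Theorem~\ref{theorem:1}: if $X$ equalled $\{y=0\}$ in a holomorphic chart, then $X$ would be real-analytic there, so a merely smooth $X$ admits no exact holomorphic straightening (only formal/approximate ones); likewise $\zeta/s_0$ need not be real-analytic on $X$, so it has no holomorphic extension to a neighborhood, only approximate extensions with $\overline\partial$ vanishing to a chosen finite order (H\"ormander--Wermer, as used in Proposition~\ref{proposition:1}). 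The paper's Section~\ref{section:4} -- Whitney's theorem, the Stein complexification $N$ of a real-analytic model $Y$, the diffeomorphism $\Phi$ whose $\overline\partial$ vanishes to order $m$ on $Y$, and Lemmas~\ref{lemma:4}--\ref{lemma:6} controlling the resulting $\overline\partial$-error $v_k$ -- exists exactly to compensate for the absence of genuine holomorphic structure along $X$. Your proposal never engages with this layer at all. The second gap concerns the Plancherel step itself: the identity $\int_{\mathbb{R}^n}|f(x+iy)|^2\,dx=\int|\hat f(\xi)|^2e^{-4\pi\xi\cdot y}\,d\xi$ requires $f$ to be holomorphic on a full tube $\mathbb{R}^n+i\{|y|<a\}$, whereas your $f$ is only holomorphic on a bounded chart $U_\alpha$; the shrinking tube $|y|\le k^{-1/2+\delta}$ controls the transverse direction but not the longitudinal one, and multiplying by a cutoff $\chi_\alpha(x)$ destroys holomorphy, so Plancherel cannot be applied to $\chi_\alpha f$. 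You flag this as the second error source but the shrinking tube plus partition of unity does not resolve it. The paper sidesteps it entirely by replacing Plancherel with Demailly's Jensen--Lelong formula applied to the sublevel sets of $\sqrt\rho$, where $\rho$ is the Guillemin--Stenzel solution of the homogeneous Monge--Amp\`ere equation with $\rho^{-1}(0)=Y$: this produces a coordinate-free sub-mean-value inequality on the tube $B_{\sqrt\rho}(\tfrac{\sqrt{2\pi}\log k}{\sqrt k})$ directly, and the constant $(2k)^{n/2}$ then emerges from the Gaussian integral of Lemma~\ref{lemma:7}. So while your strategic outline (average over $X$, quadratic normal form, Gaussian transverse integral) is aligned with the paper's, the two specific tools you lean on -- exact holomorphic straightening and global Plancherel -- are exactly the two things that are unavailable here.
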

We do not assume $M$ is projective or Stein in Theorem~\ref{theorem:1}.  
The next result shows that (\ref{equation:2}) is an optimal estimate in some cases.  
Let $A = \{a_1, a_2, \ldots, a_N\}$ be a finite sequence of points in $M \setminus X$ (possibly empty).  
We denote by 
$H^{0}_{(2), A}(M, L^k)$ the Hilbert space of holomorphic sections $f \in H^{0}_{(2)}(M, L^k)$ which has a zero at each point $a_{j}$ ($j = 1, \ldots, N$).  
Here, if $a \in M$ occurs $l$ times in $A$, then $f$ vanishes to order $l$ at $a$.  
\begin{theorem}\label{theorem:3}
Let $X \subset M$ be a compact Lagrangian submanifold of $(M, \omega)$.  
Assume that $(X, \nabla^{X})$ satisfies the Bohr-Sommerfeld condition.  
Let $A$ be a finite sequence of points in $M \setminus X$.  
We assume one of the following three conditions:  
\begin{itemize}
\item[(i)]
$M$ is a projective manifold.  
\item[(ii)]
$M$ is a Stein manifold and the Ricci form $Ric(\omega)$ of $\omega$ satisfies 
$Ric(\omega) \geq -C \omega$ on $M$ for some $C > 0$.   
\item[(iii)]
$M$ is a pseudoconvex domain in $\mathbb{C}^n$.  
\end{itemize}
Then 
\begin{equation}\label{equation:a}
\sup_{f \in H^{0}_{(2), A}(M, L^k), \, f \neq 0} \frac{\inf_{x \in X} |f(x)|^2_{h^k}}{\|f\|^2_{h^k}} \sim \frac{(2k)^{n/2}}{\mathrm{Vol}(X, \omega)} \quad (k \to +\infty).  
\end{equation}
\end{theorem}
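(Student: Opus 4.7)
The plan is to prove the matching lower bound by constructing, for each large $k$, an explicit holomorphic section $f_k \in H^0_{(2), A}(M, L^k)$ whose pointwise norm on $X$ is asymptotically $\ge 1$ and whose global $L^2$-norm is asymptotically $\mathrm{Vol}(X,\omega)/(2k)^{n/2}$. Combined with Theorem~\ref{theorem:1}, this forces the claimed equivalence. Throughout, the Bohr-Sommerfeld section $\zeta$ is the natural candidate on $X$, and the job is to extend $\zeta^{\otimes k}$ off $X$ in an almost-holomorphic fashion and then $\bar\partial$-correct it.

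I would first build a \emph{global peak section} as follows. Since $X$ is Lagrangian, it is totally real in $M$; near each $p\in X$ pick holomorphic coordinates $z=x+iy$ centered at $p$ with $X=\{y=0\}$ locally and a holomorphic frame $e_L$ of $L$ so that $\phi:=-\log|e_L|_h^2$ vanishes together with $d_x\phi$ along $X$ (this is exactly the Bohr-Sommerfeld trivialization, making $\zeta$ correspond to $e_L|_X$). Using that $X$ is totally real, extend $\zeta$ to a smooth section $\tilde\zeta$ on a tubular neighborhood $U$ of $X$ which agrees with $\zeta$ on $X$, has $|\tilde\zeta|_h=e^{-\alpha/2}$ with $\alpha$ non-negative, vanishing to order $2$ transversally to $X$ (after the normalization above, $\alpha$ has local Hessian prescribed by $\omega$), and with $\bar\partial\tilde\zeta$ vanishing to infinite order along $X$ (a Borel-type almost-holomorphic extension). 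Choose a cutoff $\chi$ supported in $U$, equal to $1$ near $X$, and set $\sigma_k=\chi\,\tilde\zeta^{\otimes k}$. Then $|\sigma_k|_{h^k}\equiv 1$ on $X$, while a Laplace/stationary-phase computation in the transverse Gaussian $\int e^{-k\alpha}\,\omega_n$ yields
\[
\|\sigma_k\|^2_{h^k}=\frac{\mathrm{Vol}(X,\omega)}{(2k)^{n/2}}\bigl(1+o(1)\bigr),\qquad \|\bar\partial\sigma_k\|^2_{h^k}=O(k^{-N})\ \text{for every }N,
\]
the second bound combining the infinite-order vanishing of $\bar\partial\tilde\zeta$ on $X$ with the exponential decay $e^{-ck}$ of the $d\chi$-contribution.

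Next I would apply $L^2$-estimates for $\bar\partial$ with singular weights to produce a holomorphic correction vanishing at the points of $A$. Enforce the vanishing along $A=\{a_1,\dots,a_N\}$ by adding a weight of the form $\psi_A=\sum_j 2(n+l_j)\log|z-a_j|$ near each $a_j$ (with the multiplicities $l_j$ prescribed by Theorem~\ref{theorem:3}); this weight is locally integrable against $\sigma_k$ since $A\cap X=\emptyset$. In case (iii) H\"ormander's theorem on a pseudoconvex domain with weight $k\phi_L+\psi_A$ applies directly, giving $u_k$ with $\bar\partial u_k=\bar\partial\sigma_k$, $u_k|_A$ vanishing to the required orders, and $\|u_k\|^2_{h^k}\le C k^{-1}\|\bar\partial\sigma_k\|^2_{h^k}$. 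In case (ii) the same estimate holds after absorbing $-C\omega$ into the positive curvature $k\omega$ of $L^k$, valid for large $k$. In case (i) one uses the standard Nadel/Demailly $L^2$ vanishing on the projective manifold $M$ with the positive bundle $L^k$ twisted by the singular weight $\psi_A$. Setting $f_k:=\sigma_k-u_k$ produces the required holomorphic section in $H^0_{(2),A}(M,L^k)$ with $\|f_k\|_{h^k}^2\sim\mathrm{Vol}(X,\omega)/(2k)^{n/2}$.

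The \emph{main obstacle} is extracting the pointwise lower bound $\inf_{x\in X}|f_k(x)|^2_{h^k}\ge 1-o(1)$ from the merely $L^2$-smallness of $u_k$: the error $u_k$ is not holomorphic, so one cannot apply the Bergman/mean-value inequality directly. I would resolve this by a standard G\r{a}rding/interior elliptic estimate for $\bar\partial$ on balls of radius $\sim k^{-1/2}$ around $x\in X$, in the frame trivialization above, giving
\[
|u_k(x)|^2_{h^k}\le C\bigl(k^n\|u_k\|^2_{L^2(B_{k^{-1/2}}(x))}+k^{n-1}\|\bar\partial u_k\|^2_{L^2(B_{k^{-1/2}}(x))}\bigr),
\]
which is $O(k^{n-N})$ uniformly in $x\in X$ and hence $o(1)$ once $N>n$. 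Then $|f_k|_{h^k}\ge|\sigma_k|_{h^k}-|u_k|_{h^k}=1-o(1)$ on $X$, and dividing by $\|f_k\|^2_{h^k}$ matches the upper bound of Theorem~\ref{theorem:1} to complete the proof of~(\ref{equation:a}).
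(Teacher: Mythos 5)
Your overall plan --- build a near-holomorphic peak section from the Bohr--Sommerfeld trivialization, estimate its $L^2$-norm by a transverse Gaussian integral, $\bar\partial$-correct with a singular weight at the points of $A$, and combine with Theorem~\ref{theorem:1} for the matching upper bound --- is exactly the paper's strategy (Section~\ref{section:3}). The differences in implementation are minor: you use a fixed cutoff $\chi$ and an extension $\tilde\zeta$ with $\bar\partial\tilde\zeta$ vanishing to infinite order along $X$, whereas the paper uses a shrinking cutoff $\chi_k=\chi(\sqrt{k}\,d_X/\log k)$ and a section $s$ with $\nabla'' s$ vanishing to a fixed large finite order $m$ (Proposition~\ref{proposition:1}(c)), tracking the $m$-dependence explicitly through Lemmas~\ref{lemma:1}--\ref{lemma:3}. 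Either works, and the treatment of cases (i)--(iii) and the singular weight near $A$ is essentially the same as the paper's.

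The genuine gap is your closing interior estimate. The inequality
\[
|u_k(x)|^2_{h^k}\le C\bigl(k^n\|u_k\|^2_{L^2(B_{k^{-1/2}}(x))}+k^{n-1}\|\bar\partial u_k\|^2_{L^2(B_{k^{-1/2}}(x))}\bigr)
\]
is not a valid elliptic estimate for $\bar\partial$: since $\bar\partial$ is first order, G\aa rding gives control of $u_k$ only in $W^{1,2}$, and $W^{1,2}$ does not embed in $L^\infty$ in real dimension $2n\ge 2$, so $L^2$ data on $u_k$ and $\bar\partial u_k$ alone cannot give a pointwise bound. The correct tool --- the one used in Lemma~\ref{lemma:3}, namely Lemma~15.1.8 of \cite{Hor} --- has a \emph{sup}-norm on $\bar\partial$:
\[
|u(x)|^2\lesssim r^{-2n}\int_{B(x,r)}|u|^2\,d\mu_{\mathrm{Leb}}+r^2\sup_{B(x,r)}|\bar\partial u|^2.
\]
You can repair your argument along your own lines because $\bar\partial u_k=\bar\partial\sigma_k$ is explicit and, by the infinite-order vanishing of $\bar\partial\tilde\zeta$ on $X$ (together with the cutoff being $\equiv 1$ near $X$), one has $\sup_{B(x,r)}|\bar\partial\sigma_k|^2e^{-k\varphi}=O(k^{-N})$ for every $N$ when $x\in X$ and $r\sim k^{-1}$. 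Substituting this sup-bound (rather than the $L^2$-bound) into the H\"ormander inequality then yields $|u_k|^2_{h^k}=o(1)$ uniformly on $X$, which is precisely what the paper establishes via Lemma~\ref{lemma:5} and Lemma~\ref{lemma:3}.
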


One of our motivations for Theorem~\ref{theorem:3} is to study a quantitative version of the theorem due to \cite{Duv-Sib} and \cite{Gue}.  

\begin{theorem}\label{theorem:2}\text{$($Theorem~2.8 of \cite{Gue}$)$}
Assume that $M$ is a projective manifold.  
Let $X \subset M$ be a totally real submanifold and $\iota: X \to M$ be the inclusion map.  
The following are equivalent: 
\begin{itemize} 
\item[(a)] 
$X$ is rationally convex.  
\item[(b)]
There exists a smooth Hodge form $\theta$ for $M$ such that $\iota^{*} \theta = 0$.  
\end{itemize}
\end{theorem}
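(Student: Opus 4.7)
The plan is to treat the two implications separately. (b)$\Rightarrow$(a) should follow from Theorem~\ref{theorem:3} combined with Kodaira's embedding theorem, while (a)$\Rightarrow$(b) requires a Duval--Sibony type construction of plurisubharmonic potentials, adapted to preserve rationality of the resulting cohomology class.

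For (b)$\Rightarrow$(a), suppose $\theta$ is a Hodge form on $M$ with $\iota^{*}\theta = 0$. After rescaling $\theta$ by a positive integer one may assume $[\theta] \in H^{2}(M, \mathbb{Z})$, and Kodaira's embedding theorem then yields a positive line bundle $(L, h)$ on $M$ with $c_{1}(L, h) = \theta$. The hypothesis $\iota^{*}\theta = 0$ makes $(\iota^{*}L, \nabla^{X})$ flat and $X$ Lagrangian for $\theta$. After tensoring $L$ with a suitable flat line bundle drawn from $\mathrm{Pic}^{0}(M)$ to cancel the holonomy on $X$, one may assume the Bohr--Sommerfeld condition holds. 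For any $p \in M \setminus X$, Theorem~\ref{theorem:3} applied to $A = \{p\}$ then produces, for each large $k$, a section $f_{k} \in H^{0}(M, L^{k})$ with $f_{k}(p) = 0$ and $\inf_{x \in X} |f_{k}(x)|_{h^{k}}^{2} > 0$. Pairing $f_{k}$ with a section $g_{k}$ that is nonvanishing on $X$ and satisfies $g_{k}(p) \neq 0$ (which exists by Theorem~\ref{theorem:3} with $A = \emptyset$) gives a rational function $f_{k}/g_{k}$ on $M$ that vanishes at $p$ and is nonvanishing on $X$, establishing rational convexity.

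For (a)$\Rightarrow$(b), fix any Hodge form $\omega_{0}$ on $M$. By rational convexity, every point of $M \setminus X$ lies on an algebraic hypersurface avoiding $X$, and a countable family $\{H_{j} = (s_{j} = 0)\}_{j \geq 1}$ can be extracted, with $s_{j}$ sections of powers $L_{0}^{n_{j}}$ of a fixed very ample line bundle $L_{0}$ and $|s_{j}|$ bounded below on $X$. Forming a Duval--Sibony weighted sum $\phi = \sum_{j} c_{j} \log|s_{j}|_{h_{0}^{n_{j}}}^{2}$ with weights $c_{j} > 0$ small enough that the series converges uniformly on a neighborhood of $X$, one obtains a plurisubharmonic function, smooth near $X$, such that the positive current $\omega_{0} + dd^{c}\phi$ lies in the rational class $[\omega_{0}]$. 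Tuning the weights so that $\iota^{*}(\omega_{0} + dd^{c}\phi) = 0$ and then applying a Demailly-type regularization that preserves both positivity near $X$ and cohomology class produces the required smooth Hodge form $\theta$.

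The principal obstacle is the construction in (a)$\Rightarrow$(b): forcing $\iota^{*}dd^{c}\phi = -\iota^{*}\omega_{0}$ exactly, rather than merely approximately, while maintaining global positivity and rationality of class, requires a compactness argument on positive currents together with a careful matching of local data transverse to $X$; this is the technical heart of both Duval--Sibony's original theorem in $\mathbb{C}^{n}$ and Guedj's projective extension. A secondary obstacle in (b)$\Rightarrow$(a) is arranging the Bohr--Sommerfeld condition: the holonomy of $\iota^{*}L$ in $\mathrm{Hom}(\pi_{1}(X), U(1))$ need not lie in the image of the restriction map from $\mathrm{Pic}^{0}(M)$, and one may have to bypass Theorem~\ref{theorem:3} by producing the separating sections directly via H\"ormander-type $L^{2}$ estimates adapted to the totally real submanifold $X$.
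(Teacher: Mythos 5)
The paper does not prove Theorem~\ref{theorem:2}: it is quoted as Theorem~2.8 of Guedj \cite{Gue} (extending Duval--Sibony \cite{Duv-Sib} from $\mathbb{C}^n$) and serves as motivation for Theorem~\ref{theorem:3}, not as a proved intermediate step. There is therefore no in-paper proof to compare yours against; what follows evaluates your sketch on its own terms using the paper's tools.

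Your (b)$\Rightarrow$(a) argument, using Theorem~\ref{theorem:3} with $A = \{p\}$ to produce a divisor through an arbitrary $p \in M \setminus X$ that avoids $X$, is a sensible way to reverse the logical flow, but two genuine gaps remain. First, Theorem~\ref{theorem:3} requires $X$ to be Lagrangian, i.e.\ real $n$-dimensional, while Theorem~\ref{theorem:2} only assumes $X$ totally real of arbitrary dimension. For $\dim_{\mathbb{R}} X < n$ the normalization $\mathrm{Vol}(X,\omega)/(2k)^{n/2}$ is wrong, and the Monge--Amp\`ere and Jensen--Lelong machinery of Section~\ref{section:5} uses the half-dimensional hypothesis essentially, so Theorem~\ref{theorem:3} does not apply as written. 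Second, as you yourself flag, tensoring by an element of $\mathrm{Pic}^{0}(M)$ to kill the holonomy of $\iota^{*}L$ fails in general, because the restriction map $H^{1}(M,U(1)) \to H^{1}(X,U(1))$ need not be surjective. The paper's Section~\ref{section:2} contains the correct device, taken from Lemma~3.2 of \cite{Duv-Sib}: one perturbs the metric $h$ near $X$ and passes to a power $L^{ll'}$ so that the periods $l'\int_{\gamma} d^{c}\varphi_{0}'$ become integral. Separately, once Theorem~\ref{theorem:3} provides $f_{k}$ vanishing at $p$ with $\inf_{X}|f_{k}|_{h^{k}}^{2} > 0$, the divisor $\{f_{k}=0\}$ already separates $p$ from $X$; the auxiliary $g_{k}$ and the quotient $f_{k}/g_{k}$ are superfluous.

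Your (a)$\Rightarrow$(b) paragraph is a summary of the Duval--Sibony/Guedj construction rather than a proof, and you correctly identify where the difficulty lies: forcing $\iota^{*}dd^{c}\phi = -\iota^{*}\omega_{0}$ exactly while keeping the class rational and the current positive after regularization. These issues are real and were the content of the original papers; nothing in the present paper addresses them, so there is no internal resource to fill that gap.
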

Here the condition (a) means that $M \setminus X$ is equal to a union of positive divisors of $M$.    
If $M = \mathbb{C}^n$, the polynomial convexity implies the rational convexity, and rational convex sets is an intermediary set between convex sets and polynomial convex sets.  
For any Lagrangian submanifold $X$ of $(M, \omega)$, 
there exists a positive integer $l$, a Hermitian metric $\tilde{h}$ of $L^l$ and its Chern connection $\widetilde{\nabla}$ of $L^l$ such that $(X, \widetilde{\nabla}^{X})$ satisfies the Bohr-Sommerfeld condition (see the remark after the proof of Proposition~\ref{proposition:1}).  
Hence, Theorem~\ref{theorem:3} implies $(b) \Rightarrow (a)$ of Theorem~\ref{theorem:2}.  
We note that Theorem~\ref{theorem:2} was first proved by \cite{Duv-Sib} in the case of $M = \mathbb{C}^n$.  

It would be an interesting problem whether the left hand side of (\ref{equation:a}) has the asymptotic series expansion.  
Bohr-Sommerfeld Lagrangian submanifolds and related asymptotic series expansion formulas are studied in symplectic settings (cf.~\cite{Ioo}).  
 
 In the proof of Theorem~\ref{theorem:1}, we use Demailly's Jensen-Lelong formula (cf.~Chapter~III of \cite{Dem}) with a potential function which satisfies the complex Monge-Amp\`ere equation outside $X$.  
There exists such a potential function if $X$ is a real-analytic manifold (\cite{Gui-Ste}).  
We note that similar computations appear in \cite{Ber-Ort}.  
In Section~\ref{section:4}, we reduce our situation to the real-analytic case. 

We introduce some notation.  
We use the notation $f \lesssim g$ to mean that $|f| \leq c |g|$ for some positive number $c$ which does not depend on $k$.  
We write $f_{k} = O(k^{-\infty})$ if $f_{k} \lesssim k^{-m}$ for any $m \in \mathbb{N}$.  
When $f(z_{1}, \ldots, z_{n})$ is a function on an open set in $\mathbb{C}^n$, 
we write $|\partial_{z} f| = (|\frac{\partial f}{\partial z_{1}}|^2 + \ldots + |\frac{\partial f}{\partial z_{n}}|^2)^{1/2}$, $|\overline{\partial}_{z} f| = (|\frac{\partial f}{\partial \overline{z}_{1}}|^2 + \ldots + |\frac{\partial f}{\partial \overline{z}_{n}}|^2)^{1/2}$.  
When $\sigma$ is an $L^k$-valued form, we denote by $|\sigma|_{h^k, \omega}$ (resp.~$\|\sigma\|_{h^k, \omega}$) the pointwise (resp.~integral) norm of $\sigma$ induced by $(h^k, \omega)$.  

\medskip 

{\it Acknowledgment.}
The author would like to express his gratitude to referees who informed him that our proof can be simplified by the complex microlocal analysis at least in the case of projective manifolds.  
Their suggestions lead the author to the study of non-projective cases.  
We are also grateful to another referee for their thorough reading of the manuscript, 
which greatly improved the clarity and exposition of the paper.
This work was supported by the 
Grant-in-Aid for Scientific Research (KAKENHI No.\! 21K03266).  
There are no conflicts of interests.  
All data generated or analysed during this study are included in this published article

\section{Bohr-Sommerfeld condition}\label{section:2}
Let $(M, \omega)$ be a K\"ahler manifold.  
Let $L \to M$ be a holomorphic prequantum line bundle over $M$ with the Chern connection $\nabla$.  
Let $X \subset M$ be a compact Lagrangian submanifold of $(M, \omega)$ and $\iota: X \to M$ be the inclusion map.  
We take a sufficiently small Stein tubular neighborhood $U \subset M$ of $X$.  
Since the de Rham cohomology class $[c_{1}(L, h)]$ vanishes on $U$, there exists $l \in \mathbb{N}$ such that $L|_{U}^l$ is a trivial holomorphic line bundle (cf.~\cite{Gue}). 
In this situation, we introduce a basic property of the Bohr-Sommerfeld condition.    
\begin{proposition}\label{proposition:1}
The following conditions are equivalent.  
\begin{itemize}
\item[(a)]
$(X, \nabla^{X})$ satisfies the Bohr-Sommerfeld condition.  
\item[(b)]
By shrinking $U$ if necessary, there exists a non-vanishing smooth section $s \in C^{\infty}(U, L)$ which satisfies $\nabla s = 0$ on $X$. 
\item[(c)]
By shrinking $U$ if necessary, 
there exists a non-vanishing smooth section $s \in C^{\infty}(U, L)$ such that $\log |s|^2_{h} = 0$ to order two on $X$ and $\nabla'' s = 0$ to order $m$ on $X$ for any $m \in \mathbb{N}$.  
\item[(d)]
By shrinking $U$ if necessary, 
there exists a non-vanishing holomorphic section $s_{0} \in H^0(U, L)$ which satisfies 
$\int_{\gamma}d^{c}\log |s_{0}|^2_{h} \in 4\pi \mathbb{Z}$ for any $\gamma \in H_1(X, \mathbb{Z})$.  
\end{itemize}
Here $\nabla''$ is the $(0, 1)$-part of $\nabla$ and $d^c$ is defined by $d^c f(v) = -df (Jv)$ for $v \in TM$.   
As $dd^c \log |s_0|_h^2 = -4\pi \omega$ in (d), and the restriction of $\omega$ to $X$ vanishes, we note that the restriction of $d^c \log |s_0|_h^2$ to $X$ is closed.  
\end{proposition}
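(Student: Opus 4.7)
The plan is to work in the holomorphic trivialization of $L|_U$, fixing (after passing to $L^l$ if necessary, as noted above) a non-vanishing holomorphic section $s_0 \in H^0(U, L)$ with $|s_0|^2_h = e^{-\psi}$. Every non-vanishing smooth section of $L|_U$ then writes as $s = e^{-\phi} s_0$ for some complex function $\phi$, and a direct computation yields
\[
\nabla s = -(d\phi + \partial\psi) \otimes s, \qquad \log|s|^2_h = -(\phi + \bar\phi + \psi).
\]
All four conditions translate into statements about $\phi$; I would establish $(\mathrm{a}) \Rightarrow (\mathrm{b}) \Rightarrow (\mathrm{c}) \Rightarrow (\mathrm{a})$ cyclically, and $(\mathrm{a}) \Leftrightarrow (\mathrm{d})$ separately.

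For $(\mathrm{a}) \Leftrightarrow (\mathrm{d})$, a unit parallel section on $X$ takes the form $\zeta = e^{-\phi_0} s_0|_X$, and $\nabla^X\zeta = 0$ translates to $d\phi_0 = -\iota^*\partial\psi$. The right-hand side is closed on $X$ since $\iota^*\omega = 0$; decomposing via $\partial\psi = \tfrac{1}{2}(d\psi + i d^c\psi)$, the real part admits a primitive on $X$ unconditionally, while the imaginary part yields $\mathrm{Im}(\phi_0)$ (well-defined modulo $2\pi$) precisely when the periods of $\tfrac{1}{2}\iota^*d^c\psi$ lie in $2\pi\mathbb{Z}$. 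Substituting $\psi = -\log|s_0|^2_h$ turns this into condition (d).

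For $(\mathrm{a}) \Rightarrow (\mathrm{b})$, I would use a Whitney-type extension to construct $\tilde\phi \in C^\infty(U)$ with prescribed $1$-jet along $X$ so that $d\tilde\phi = -\partial\psi$ holds pointwise on $X$: the tangential components match by (a), and the normal components are freely prescribed in a tubular neighborhood. Then $s = e^{-\tilde\phi}s_0$ satisfies $\nabla s = 0$ on $X$. For $(\mathrm{b}) \Rightarrow (\mathrm{c})$, start from the section in (b) (so $\bar\partial\phi$ vanishes on $X$) and iteratively add corrections $\chi$ with $\chi|_X = \partial\chi|_X = 0$ (to preserve the first-order vanishing of $\log|s|^2_h$) that raise the vanishing order of $\bar\partial(\phi+\chi)$ on $X$. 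Using coordinates in which $X$ is modeled on $\mathbb{R}^n \subset \mathbb{C}^n$, one solves for the Taylor coefficients of $\chi$ in the transverse $y$-directions inductively and sums via Whitney's extension theorem to obtain such $\chi$ for any finite $m$. For $(\mathrm{c}) \Rightarrow (\mathrm{a})$, set $\zeta = \iota^* s$: the first-order vanishing of $\log|s|^2_h$ on $X$ gives $d(\phi + \bar\phi + \psi) = 0$ pointwise on $X$, and combined with $\bar\partial\phi|_X = 0$ (hence $\partial\bar\phi|_X = 0$), taking the $(1,0)$-component of this identity forces $\partial\phi + \partial\psi = 0$ on $X$; together with $\bar\partial\phi|_X = 0$ this gives $d\phi + \partial\psi = 0$ on $X$, so $\nabla s|_X = 0$ and $\zeta$ is parallel with $|\zeta|_h = 1$.

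The principal obstacle is the step $(\mathrm{b}) \Rightarrow (\mathrm{c})$: the Whitney-type construction of almost-holomorphic extensions to arbitrary finite order on the totally real submanifold $X$, carried out by induction on the order of vanishing of $\bar\partial\phi$ while respecting the $1$-jet constraint imposed by the norm condition. The other implications are either cohomological (the equivalence $(\mathrm{a}) \Leftrightarrow (\mathrm{d})$) or reduce to routine jet-extension arguments.
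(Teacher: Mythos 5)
Your proposal is correct and matches the paper's approach: the same passage to the holomorphic trivialization by $s_0$, the same period computation for $(\mathrm{a})\Leftrightarrow(\mathrm{d})$, and the same reliance on almost-holomorphic extension off the totally real submanifold $X$ (what you flag as the ``principal obstacle'' in $(\mathrm{b})\Rightarrow(\mathrm{c})$ is precisely the H\"ormander--Wermer Lemma, cf.\ \cite{Hor-Wer} and Proposition~5.55 of \cite{Cie-Eli}, which the paper simply cites rather than re-derives). The only organizational difference is that the paper applies the H\"ormander--Wermer extension once to $\zeta/s_0$ to obtain $(\mathrm{a})\Rightarrow(\mathrm{c})$ directly, instead of splitting it into a $1$-jet Whitney extension $(\mathrm{a})\Rightarrow(\mathrm{b})$ followed by the higher-order correction $(\mathrm{b})\Rightarrow(\mathrm{c})$; also note that in your $(\mathrm{b})\Rightarrow(\mathrm{c})$ step one must first normalize $s$ by a constant on each component of $X$, since $(\mathrm{b})$ alone only gives $d\log|s|^2_h=0$ on $X$, not $\log|s|^2_h=0$.
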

\begin{proof}
Assume (a).  
There exists $\zeta \in C^{\infty}(X, \iota^{*}L)$ such that $\nabla^{X} \zeta = 0$ and $|\zeta|_{h} =1$ on $X$.  
This implies that $L|_{U}$ is a trivial smooth line bundle, and it is holomorphically trivial by the Oka principle.  
We take a non-vanishing holomorphic section $s_{0} \in H^0(U, L|_{U})$ 
and put $\varphi_{0} = - \log |s_{0}|^2_{h}$.  
For any $m \in \mathbb{N}$, a smooth function $\zeta/s_{0}$ on $X$ can be extended to a function $\xi \in C^{\infty}(U)$ with $\overline{\partial} \xi = 0$ to order $m$ on $X$ by the H\"ormander-Wermer Lemma (cf.~\cite{Hor-Wer}, Proposition~5.55 of \cite{Cie-Eli}).    
By taking $U$ sufficiently small, we may assume $\xi \neq 0$.  
Put $s = \xi s_{0}$.  
Then $\nabla'' s = 0$ to order $m$ on $X$.  
For any $p \in X$, we have $\log |s(p)|_{h}^2 = \log |\zeta(p)|^2_{h} = 0$, and 
\begin{equation}\label{equation:3}
\nabla s(p) = (d \xi - \xi \partial \varphi_{0})s_{0}(p) = (\partial \log (|\xi|^2e^{-\varphi_{0}}))s(p) = (\partial \log |s|^2_{h})s(p).  
\end{equation}
Since $\nabla_{v}s = \nabla^{X}_{v}\zeta = 0$ for any $v \in T_{p}X \subset T_pM$, 
we have 
\[
0 = \langle \partial \log |s|^2_{h}, v \rangle = \frac{1}{2} \langle d \log |s|^2_{h}, v \rangle + \frac{\sqrt{-1}}{2} \langle d^c \log |s|^2_{h}, v\rangle 
\]
where $\langle \cdot, \cdot \rangle$ denotes the natural pairing between $T_{p}M$ and $T^{*}_{p}M$.  
Hence, $\langle d \log |s|_h^2, v \rangle = 0$ and $\langle d^c \log |s|_h^2, v\rangle = -\langle d \log |s|_{h}^2, Jv \rangle = 0$.  
Since $T_{p}M = T_{p}X \oplus JT_{p}X$, we have  
$d \log |s|^2_{h}(p) = 0$ for any $p \in X$.  
This shows $(a) \Rightarrow (c)$.  
Moreover, $d \log |s|^2_{h}(p) = 0$ implies 
$\partial \log |s|^2_{h}(p) = 0$ for any $p \in X$, and (\ref{equation:3}) shows 
$(a) \Rightarrow (b)$.  

On the other hand, assume $s \in C^{\infty}(U, L)$ satisfies the condition of (c) for $m \in \mathbb{N}$.  
Since $\nabla''s = 0$ on $X$, 
it follows that $\nabla s (p) = (\partial \log |s|_{h}^2) s(p)$ for any $p \in X$ in the similar manner to (\ref{equation:3}).  
Then $(d \log |s|_{h}^2) s(p) = 0$ shows that $\nabla s = 0$ on $X$, 
and $(X, \nabla^{X})$ satisfies the Bohr-Sommerfeld condition.  
Hence (a), (b) and (c) are equivalent.  

Now we prove $(a) \Rightarrow (d)$.  
We take $\varphi_{0}, \xi \in C^{\infty}(U)$ as above.  
Put $\tau = \log \xi: U \to \mathbb{C}/ 2\pi \sqrt{-1} \mathbb{Z}$.  
We have $d\varphi_{0}(p) = 2d \, \mathrm{Re}\,\tau(p)$ for any $p \in X$ since $d\log |s|^2_{h}(p) = 0$.   
This implies $d^c \varphi_{0}(p) = 2d^c \,\mathrm{Re}\,\tau(p) = 2d\, \mathrm{Im}\,\tau (p)$ 
by the Cauchy-Riemann equation.  
Let $\gamma:[0, 1] \to X$ be a smooth closed curve.  
Then 
\[
\int_{\gamma} d^c \varphi_{0} = 2\int_{\gamma} d\, \mathrm{Im}\, \tau = 2\mathrm{Im}\,\tau(\gamma(1))-2\mathrm{Im}\, \tau(\gamma(0)) \in 4\pi \mathbb{Z}
\]
and (d) holds.  

Conversely, we assume (d).  
Put $\varphi_{0} = -\log |s_{0}|^2_{h}$ and 
define smooth function $g$ on X by $g = \exp(\frac{1}{2}\varphi_{0}+\frac{\sqrt{-1}}{2}\int d^{c}\varphi_{0})$.  
Using H\"ormander-Wermer Lemma, we extend $g$ to a function $\tilde{g} \in C^{\infty}(U)$ with $\overline{\partial}\tilde{g} =0$ to order $m \in \mathbb{N}$ on $X$.  
Put $\tilde{g} = e^{G}$ locally.   
%revise3
%
The Cauchy-Riemann equation shows $d^c\, \mathrm{Re}\, G = d\, \mathrm{Im}\, G$ on $X$.  
Then, for any $v \in TX$, we have 
\[
\langle d (2\mathrm{Re}\,G - \varphi_{0}), v \rangle = \langle d(\varphi_{0} - \varphi_{0}), v \rangle = 0, 
\]
\[
\langle d^c (2\mathrm{Re}\, G - \varphi_{0}), v\rangle = \langle2 d \,\mathrm{Im}\, G - d^c \varphi_{0}, v\rangle = \langle d \int d^c \varphi_{0} -d^c \varphi_{0}, v \rangle = 0.  
\]
Hence $d(2\mathrm{Re}\, G -\varphi_0)(p) = 0$ for any $p \in X$.  
Define $s = \tilde{g}s_{0} \in C^{\infty}(U, L)$. 
Then $s$ satisfies $\nabla''s = 0$ to order $m$ and 
$d \log |s|_{h}^2 = 0$ on $X$.  
By multiplying $s$ by a constant, we may assume that $\log |s|_{h}^2$ is identically zero on $X$, 
and $s$ satisfies the condition of (c).  
\end{proof}
\begin{remark}
Assume there exists a non-vanishing holomorphic section $s_{l, 0} \in H^{0}(U, L^l)$.  
Put $\varphi_{0} = -\frac{1}{l}\log |s_{l, 0}|^2_{h^l}$.  
For any $\varepsilon > 0$ and any $q \in \mathbb{N}$, there exists $\varphi_{0}' \in C^{\infty}(U)$ and $l' \in \mathbb{N}$ 
such that $|\varphi_{0}-\varphi_{0}'|_{C^{q}} < \varepsilon$ on $U$, $\varphi_{0} = \varphi_{0}'$ on $X$, $\iota^{*}dd^{c}\varphi'_{0} = 0$ and that $l'\int_{\gamma} d^c \varphi_{0}' \in 4\pi \mathbb{Z}$ for any $\gamma \in H_1(X, \mathbb{Z})$ (see the proof of Lemma~3.2 of \cite{Duv-Sib}).  
Let $h' = he^{\varphi_{0}-\varphi_{0}'}$ be a Hermitian metric of $L|_{U}$. 
Then the Chern connection 
associated with $(L|_{U}^{ll'}, h'^{ll'})$ satisfies the Bohr-Sommerfeld condition on $X$.  
Furthermore, the $C^q$-norm of $|\log h'/h|$ is smaller than $\varepsilon$.    
\end{remark}
\section{Reduction to the real-analytic case}\label{section:4}
Let $M$ be a complex manifold of dimension $n$.  
Let $X \subset M$ be a compact Lagrangian submanifold of $(M, \omega)$ such that $(X, \nabla^{X})$ satisfies the Bohr-Sommerfeld condition.  
Let $U \subset M$ be a sufficiently small Stein tubular neighborhood of $X$.  
We take a non-vanishing section $s_{0} \in H^{0}(U, L)$ and put $\varphi_{0} = -\log |s_{0}|^2_{h}$.  
Take $m \in \mathbb{N}$ to be sufficiently large 
and take $s \in C^{\infty}(U, L)$ which satisfies the condition of (c) in Proposition~\ref{proposition:1}, that is, one-jet of $\log |s|_{h}^2$ vanishes on $X$ and $\nabla'' s = 0$ to order $m$ on X.  
We write $s = \xi s_{0}$ for $\xi \in C^{\infty}(U)$.  
Put $\varphi = -\log |s|_{h}^2$.  
Let $f_{k} \in H^{0}_{(2)}(M, L^k)$ which satisfies $\|f_{k}\|_{h^k} = 1$ and let $u_{k} \in C^{\infty}(U)$ such that $f_{k} = u_{k}s^k$ on $U$.  
By Whitney's theorem (Theorem~1 of \cite{Whi2}), there exists a real-analytic manifold $Y$ which is diffeomorphic to  $X$. 
By a theorem of Bruhat and Whitney (\cite{Whi-Bru}), there exists a complex manifold $N$ of complex dimension $n$, which contains $Y$ as a real analytic and totally real submanifold.  
It is possible to take $N$ as a Stein manifold (see Proposition~7 of \cite{Gra}).  
By shrinking $N$ and $U$ if necessary, 
there exists a diffeomorphism $\Phi: N \to U$ such that $\Phi(Y) = X$ and that the $m$-jet of $\overline{\partial}\Phi$ vanishes on $Y$ (cf.~Proposition~5.55 of \cite{Cie-Eli}).  
Put $\frac{1}{4\pi}dd^c (\varphi \circ \Phi) = \omega'$ and $\omega'_{n} = (\omega')^n/n!$. 
We have $\omega' = \frac{1}{4\pi}\Phi^{*} dd^c \varphi = \Phi^{*}\omega$ on $Y$ 
and $Y$ is a Lagrangian submanifold of $(N, \omega')$.   
We note that $\varphi \circ \Phi$ is a strictly plurisubharmonic function near $Y$.  
Let $d_{Y}(z)$ be the distance from $z \in N$ to $Y$ with respect to the Riemannian metric induced by $\omega'$.   
Put $W_{k} = \{z \in N\, |\, d_{Y}(z) < \frac{2\log k}{\sqrt{k}}\}$.  
\begin{lemma}\label{lemma:4}
\[
\int_{W_{k}} |\overline{\partial}(u_{k}\circ\Phi)|^2_{\omega'} e^{-k\varphi \circ \Phi}\omega'_{n} = O(k^{-m+4}).  
\]
\end{lemma}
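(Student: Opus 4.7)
My plan is to exploit the observation that $v_{k} := u_{k}\xi^{k} = f_{k}/s_{0}^{k}$ is holomorphic on $U$, being a ratio of holomorphic sections. This will allow every $\overline{\partial}$-derivative of $u_{k}\circ\Phi$ to be rewritten as a factor that vanishes to order $m$ on $Y$: either from $\overline{\partial}\Phi$ (by the construction of $\Phi$) or from $\overline{\partial}\xi$, which vanishes to order $m$ on $X$ because $\nabla''s = 0$ to order $m$ on $X$ and $s=\xi s_{0}$ with $s_{0}$ holomorphic.

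First I would write $u_{k}\circ\Phi = (v_{k}\circ\Phi)/(\xi\circ\Phi)^{k}$ and use $\overline{\partial}v_{k}=0$; the chain rule gives
\begin{equation*}
\overline{\partial}(u_{k}\circ\Phi) \;=\; \frac{(\partial v_{k})\circ\Phi\cdot\overline{\partial}\Phi}{(\xi\circ\Phi)^{k}} \;-\; k\,(u_{k}\circ\Phi)\,\frac{\overline{\partial}(\xi\circ\Phi)}{\xi\circ\Phi}.
\end{equation*}
The vanishing of $\overline{\partial}\Phi$ on $Y$ and of $\overline{\partial}\xi$ on $X=\Phi(Y)$, each to order $m$, together with the chain rule for $\xi\circ\Phi$, yields $|\overline{\partial}\Phi|_{\omega'}\lesssim d_{Y}^{m}$ and $|\overline{\partial}(\xi\circ\Phi)|_{\omega'}\lesssim d_{Y}^{m}$ on $W_{k}$. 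Using $\varphi=\varphi_{0}-2\log|\xi|$, so that $|\xi|^{-2k}e^{-k\varphi}=e^{-k\varphi_{0}}$, and the fact that $\xi$ is bounded away from $0$ near $X$, this produces the pointwise estimate
\begin{equation*}
|\overline{\partial}(u_{k}\circ\Phi)|^{2}_{\omega'}\,e^{-k\varphi\circ\Phi} \;\lesssim\; d_{Y}^{2m}\Bigl(|\partial v_{k}|^{2}\circ\Phi\cdot e^{-k\varphi_{0}\circ\Phi} \;+\; k^{2}\,|u_{k}\circ\Phi|^{2}\,e^{-k\varphi\circ\Phi}\Bigr).
\end{equation*}

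I would then integrate over $W_{k}$ and change variables via $\Phi$, whose Jacobian is uniformly bounded near $Y$. On $W_{k}$ we have $d_{Y}^{2m}\leq(2\log k/\sqrt{k})^{2m}=O(k^{-m}(\log k)^{2m})$, so the contribution of the second term is bounded by $k^{2}\cdot\|f_{k}\|^{2}_{h^{k}}\cdot O(k^{-m}(\log k)^{2m}) = O(k^{-m+2}(\log k)^{2m})$, using $|u_{k}|^{2}e^{-k\varphi}=|f_{k}|^{2}_{h^{k}}$ and $\|f_{k}\|_{h^{k}}=1$. For the first term, the Chern-connection identity $\nabla^{1,0}f_{k} = (\partial v_{k} - kv_{k}\,\partial\varphi_{0})s_{0}^{k}$ gives $|\partial v_{k}|^{2}e^{-k\varphi_{0}}\lesssim|\nabla^{1,0}f_{k}|^{2}_{h^{k}} + k^{2}|f_{k}|^{2}_{h^{k}}$, and a Cauchy estimate on balls of radius $\sim 1/k$ (so that the weight $e^{-k\varphi_{0}}$ is frozen up to bounded factors) combined with the sub-mean-value inequality for the holomorphic function $v_{k}$ yields $\int_{\Phi(W_{k})}|\partial v_{k}|^{2}e^{-k\varphi_{0}}\,\omega_{n}\lesssim k^{2}$. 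Both contributions are therefore $O(k^{-m+2}(\log k)^{2m})$, and since $(\log k)^{2m}\ll k^{2}$ this delivers the asserted $O(k^{-m+4})$.

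The hard part will be the derivative estimate on $\partial v_{k}$: because $e^{-k\varphi_{0}}$ can oscillate by $e^{O(\sqrt{k})}$ over holomorphic balls of the natural radius $1/\sqrt{k}$, one must instead use balls of radius $1/k$ to freeze the weight, and this costs the extra factor of $k$ in the pointwise Cauchy bound which is precisely what produces the $k^{4}$ slack in the final exponent.
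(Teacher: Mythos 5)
Your argument is correct, and it takes a genuinely different route from the paper's for the key estimate. You set $v_{k}=u_{k}\xi^{k}=f_{k}/s_{0}^{k}$, which is holomorphic, and your pointwise decomposition
\begin{equation*}
|\overline{\partial}(u_{k}\circ\Phi)|^{2}_{\omega'}e^{-k\varphi\circ\Phi}\;\lesssim\; d_{Y}^{2m}\Bigl(|\partial_{z}v_{k}|^{2}\circ\Phi\cdot e^{-k\varphi_{0}\circ\Phi}+k^{2}|u_{k}\circ\Phi|^{2}e^{-k\varphi\circ\Phi}\Bigr)
\end{equation*}
is verified correctly using $|\xi\circ\Phi|^{-2k}e^{-k\varphi\circ\Phi}=e^{-k\varphi_{0}\circ\Phi}$, the order-$m$ vanishing of $\overline{\partial}\Phi$ on $Y$, and the order-$m$ vanishing of $\overline{\partial}\xi$ on $X$. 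The term with $|u_{k}\circ\Phi|^{2}e^{-k\varphi\circ\Phi}$ matches the first (easy) half of the paper's proof. The treatment of the other term is where you and the paper diverge: the paper forms the quantity $I_{k}=\int\tilde{\chi}_{k}^{2}|\partial u_{k}|^{2}|\overline{\partial}\Phi|^{2}e^{-k\varphi\circ\Phi}\omega'_{n}$ and runs a Caccioppoli-type integration by parts against the cutoff $\tilde{\chi}_{k}$, producing a self-improving inequality $I_{k}\lesssim k^{-m+2}+k^{-m/2+2}I_{k}^{1/2}$ that is then closed to give $I_{k}=O(k^{-m+4})$. You instead prove $\int_{\Phi(W_{k})}|\partial v_{k}|^{2}e^{-k\varphi_{0}}\omega_{n}\lesssim k^{2}$ directly: a Cauchy estimate for the holomorphic function $v_{k}$ on Euclidean balls of radius $r=1/k$ (small enough that $e^{-k\varphi_{0}}$ is frozen up to bounded factors) gives the pointwise bound $|\partial v_{k}(p)|^{2}e^{-k\varphi_{0}(p)}\lesssim k^{2n+2}\int_{B(p,2/k)}|v_{k}|^{2}e^{-k\varphi_{0}}$, and Fubini then collapses the $k^{2n}$ against the ball volume, leaving $k^{2}\|f_{k}\|_{h^{k}}^{2}=k^{2}$. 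This avoids the recursion entirely and yields the slightly stronger $O(k^{-m+2}(\log k)^{2m})$, from which $O(k^{-m+4})$ follows. Your closing comment about why balls of radius $1/\sqrt{k}$ are unusable and why the $1/k$-scale costs the extra factor of $k$ correctly identifies the source of the slack; in the paper that same slack emerges from the recursion step. The Chern-connection identity in your write-up is a side remark (you never actually estimate $\|\nabla^{1,0}f_{k}\|$, nor do you need to on a general complex manifold); the Cauchy-estimate route is what carries the proof, and it does so cleanly. The paper's Lemma~\ref{lemma:5} uses a similar mean-value/Cauchy device, but only later, for the pointwise estimate of the $\overline{\partial}$-solution on $Y$ in Lemma~\ref{lemma:6}; applying it here, to $\partial v_{k}$ directly, is a tidy simplification of the proof of Lemma~\ref{lemma:4}.
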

\begin{proof}
Assume that $k \in \mathbb{N}$ is sufficiently large number.  
For simplicity we abuse notation and denote the norms of forms $|\cdot |_{\omega}$ and $|\cdot|_{\omega'}$ by $|\cdot|$.  
For example, $|(\partial u_k) \circ \Phi|$ defines a function on $N$ that assigns to each $z \in N$ a value of $|\partial u_k (\Phi(z))|_{\omega}$.  
%revised3
%
We consider $\partial \Phi$ (resp., $\overline{\partial}\Phi$) as $\Phi^{*}TU$-valued one-form, and denote 
the norm of $\partial \Phi$ (resp., $\overline{\partial} \Phi$) with respect to $\omega$ and $\omega'$ by $|\partial \Phi|$ (resp., $|\overline{\partial} \Phi|$).  
Because $\overline{\partial}(u_{k}\xi^k) = 0$, we have $\overline{\partial}u_{k} = -ku_{k}\xi^{-1}\overline{\partial} \xi$.  
We have 
\begin{align*}
& |\overline{\partial} (u_{k} \circ \Phi)|^2 
\lesssim | (\partial u_{k}) \circ \Phi|^2 |\overline{\partial}\Phi|^2 + |(\overline{\partial} u_{k}) \circ \Phi|^2  |\overline{\partial} \overline{\Phi}|^2 
\lesssim | (\partial u_{k})\circ \Phi|^2 |\overline{\partial}\Phi|^2 + k^2 |(u_{k}\overline{\partial}\xi)\circ \Phi|^2 |\partial \Phi|^2.  
\end{align*}
Since $|\overline{\partial} \xi|^2= O(\frac{(\log k)^{2m}}{k^{m}})$ on $\Phi(W_{k})$ and $\int_{\Phi(W_k)}|u_k|^2e^{-k\varphi} \omega_n \leq \|f_{k}\|^2_{h^k} = 1$, we have 
\[
k^2 \int_{W_{k}} |(u_{k} \overline{\partial}\xi)\circ \Phi|^2 |\partial \Phi|^2 e^{-k\varphi \circ \Phi} \omega'_{n} 
\lesssim 
k^2 \int_{\Phi(W_{k})} |u_{k}|^2 |\overline{\partial}\xi|^2 e^{-k\varphi} \omega_{n} 
= 
O(\frac{ (\log k)^{2m}}{k^{m-2}}).  
\]
Let $\chi \in C^{\infty}(\mathbb{R})$ be a function such that $0 \leq \chi \leq 1$, $\chi = 1$ on $(-\infty, 1/2]$ and that $\chi = 0$ on $[1, +\infty)$.  
Put $\tilde{\chi}_{k} = \chi (\frac{\sqrt{k}d_{Y}}{4\log k})$.  
Then 
\[
\int_{W_{k}} |(\partial u_{k})\circ \Phi|^2 |\overline{\partial} \Phi|^2 e^{-k\varphi \circ \Phi} \omega'_{n} \leq \int_{N}\tilde{\chi}^2_{k} |(\partial u_{k})\circ \Phi|^2 |\overline{\partial} \Phi|^2 e^{-k\varphi \circ \Phi} \omega'_{n}.  
\]
Denote by $I_{k}$ the right hand side of the above inequality.  
We have 
\begin{align*}
I_{k} & \lesssim \int_{U} \tilde{\chi}^2_{k}\circ \Phi^{-1} |(\overline{\partial} \Phi)\circ \Phi^{-1}|^2 e^{-k \varphi} \partial u_{k} \wedge \overline{\partial u_{k}} \wedge \omega^{n-1} \\
& \lesssim \int_{U} \tilde{\chi}^2_{k}\circ \Phi^{-1} |(\overline{\partial} \Phi)\circ \Phi^{-1}|^2 e^{-k \varphi} |u_{k}| |\partial \overline{\partial} u_{k}| \omega_{n} 
 +  k  \int_{U}  \tilde{\chi}^2_{k}\circ \Phi^{-1} |(\overline{\partial} \Phi)\circ \Phi^{-1}|^2 e^{-k \varphi} |u_{k}| |\partial u_{k}| \omega_{n}   \\ 
& \qquad
+ \int_{U} |\partial \left((\tilde{\chi}^2_{k} |\overline{\partial} \Phi|^2) \circ \Phi^{-1} \right)| e^{-k \varphi}|u_{k}| |\partial u_{k}| \omega_{n}.  
\end{align*}
Since $|\partial \overline{\partial} u_k| =|-k \xi^{-1}\partial u_k \wedge \overline{\partial}\xi + k u_k \xi^{-2}\partial \xi \wedge \overline{\partial} \xi -ku_k \xi^{-1} \partial \overline{\partial} \xi| \lesssim k|u_k| + k |\partial u_k|$ near $X$, we have  
\begin{align*}
I_k & \lesssim 
k \left( \int_{U} \tilde{\chi}_{k}^2 \circ \Phi^{-1} |(\overline{\partial} \Phi) \circ \Phi^{-1}|^2 e^{-k \varphi} |{u}_{k}|^2 \omega_{n} 
+  \int_{U} \tilde{\chi}^2_{k}\circ \Phi^{-1} |(\overline{\partial} \Phi)\circ \Phi^{-1}|^2 e^{-k \varphi} |u_{k}| |\partial u_{k}| \omega_{n} \right.\\
& \qquad 
+\left.\int_{U} \tilde{\chi}_{k}\circ \Phi^{-1} |(\overline{\partial} \Phi)\circ \Phi^{-1}|^2 e^{-k \varphi} |u_{k}| |\partial u_{k}| \omega_{n}\right)\\
&\qquad + \int_{U} \tilde{\chi}^2_{k}\circ \Phi^{-1} |(\overline{\partial} \Phi)\circ\Phi^{-1}| |(D \overline{\partial} \Phi) \circ \Phi^{-1}|e^{-k \varphi} |u_{k}| |\partial u_{k}| \omega_{n}
\end{align*}
where $D$ is a differential operator of order one on $N$.  
We have 
\[
k \int_{U} \tilde{\chi}^2_{k}\circ \Phi^{-1} |(\overline{\partial} \Phi)\circ \Phi^{-1}|^2 e^{-k \varphi} |{u}_{k}|^2 \omega_{n}
= O(\frac{(\log k)^{2m}}{k^{m-1}}) 
\]
since $|\overline{\partial} \Phi| = O(\frac{(\log k)^m}{k^{m/2}})$ on the support of $\chi_k$ and $\int_{U}|u_k|^2 e^{-k\varphi} \omega_n \leq 1$.  
Put $W'_{k} = \{w \in N\, |\, d_{Y}(w) < \frac{4\log k}{\sqrt{k}}\}$.  
The Cauchy-Schwarz inequality implies 
\begin{align*}
& k\int_{U} \tilde{\chi}^2_{k}\circ\Phi^{-1} |(\overline{\partial} \Phi)\circ \Phi^{-1}|^2 e^{-k \varphi} |u_{k}| |\partial u_{k}| \omega_{n} 
\leq k \int_{U} \tilde{\chi}_{k}\circ\Phi^{-1} |(\overline{\partial} \Phi)\circ \Phi^{-1}|^2 e^{-k \varphi} |u_{k}| |\partial u_{k}| \omega_{n}  \\
\leq &
k \left( \int_{\Phi(W'_{k})}|(\overline{\partial} \Phi)\circ \Phi^{-1}|^2 |u_{k}|^2 e^{-k \varphi} \omega_{n} \right)^{1/2} \left(\int_{U} \tilde{\chi}^2_{k}\circ \Phi^{-1} |\partial u_{k}|^2 |(\overline{\partial} \Phi)\circ \Phi^{-1}|^2 e^{-k \varphi} \omega_{n} \right)^{1/2} \\
\lesssim & 
\frac{(\log k)^{m}}{k^{m/2-1}}
\left(\int_{N} \tilde{\chi}^2_{k} |(\partial u_{k})\circ \Phi|^2 |\overline{\partial} \Phi|^2 e^{-k \varphi\circ \Phi} \omega'_{n} \right)^{1/2} 
\lesssim 
\frac{(\log k)^{m}}{k^{m/2-1}} I_{k}^{1/2}.  
\end{align*}
Here we used $|(\overline{\partial} \Phi) \circ \Phi^{-1}| = O(\frac{(\log k)^m}{k^{m/2}})$ on $\Phi(W_k')$ and $\int_{\Phi(W_k')} |u_k|^2e^{-k\varphi} \omega_n \leq 1$.  
We also have 
\begin{align*}
& \int_{U} \tilde{\chi}^2_{k}\circ \Phi^{-1} |(\overline{\partial} \Phi)\circ \Phi^{-1}| |(D \overline{\partial} \Phi) \circ \Phi^{-1}| |u_{k}| |\partial u_{k}|e^{-k \varphi}\omega_{n} \\
\leq & 
\left(\int \tilde{\chi}^2_{k}\circ \Phi^{-1} |(D\overline{\partial} \Phi) \circ \Phi^{-1}|^2 |u_{k}|^2 e^{-k\varphi}\omega_{n} \right)^{1/2}
\left(\int \tilde{\chi}^2_{k} \circ\Phi^{-1} |(\overline{\partial} \Phi) \circ \Phi^{-1}|^2 |\partial u_{k}|^2 e^{-k\varphi}\omega_{n} \right)^{1/2} \\
\lesssim & 
\frac{(\log k)^{m-1}}{k^{m/2-1/2}} I_{k}^{1/2}.
\end{align*}
Here we used $|(D\overline{\partial} \Phi) \circ \Phi^{-1}| = O(\frac{(\log k)^{m-1}}{k^{(m-1)/2}})$ on the support of $\chi_k$.  
Hence there exists $c > 0$ which does not depend on $k$ and 
\[
I_{k} \leq c \left( \frac{1}{k^{m-2}} + \frac{1}{k^{m/2-2}} I_{k}^{1/2} \right) 
\leq 
c \left(\frac{1}{k^{m-2}} + \frac{2c}{k^{m-4}} + \frac{1}{2c}I_{k} \right).  
\]
We have $I_{k} = O(k^{-m+4})$ and we complete the proof.  
\end{proof}
Now we introduce the H\"ormander's $L^2$-estimate for $\overline{\partial}$-equation.  
\begin{theorem}{$($cf.~Corollary~5.3 of \cite{Dem2}}\label{theorem:4}$)$
Let $M$ be a Stein manifold of dimension $n$ with a K\"ahler metric $\omega$.   
Let $L$ be a holomorphic line bundle over $M$.  
Suppose that there exists a Hermitian metric $h$ (which may be singular) of $L$ such that its Chern form $c_{1}(L, h)$ 
satisfies 
\begin{equation}\label{inequality:1}
c_{1}(L, h) \geq C \omega
\end{equation}
on $M$ for some $C>0$.  
Then, for any $L$-valued square integrable $(n, 1)$-form $g$ such that $\overline{\partial} g = 0$ on $M$, there exists an $L$-valued $(n, 0)$-form $f$ which satisfies $\overline{\partial} f = g$ and 
\[
\int_{M} |f|^2_{h, \omega} \omega_{n} \leq  C' \int_{M} |g|^2_{h, \omega} \omega_n.  
\]
%revised
%
Here $C' > 0$ is a positive real number which depends only on $C$.  
\end{theorem}
For sufficiently large $k \in \mathbb{N}$, $W_{k}$ is a Stein manifold.  
By regarding $u_{k} \circ \Phi$ as a $\bigwedge^n T^{(1, 0)}N|_{W_{k}}$-valued $(n, 0)$-form, 
Theorem~\ref{theorem:4} implies that 
there exists $v_{k} \in C^{\infty}(W_{k})$ such that $\overline{\partial}v_{k} = \overline{\partial} (u_{k}\circ \Phi)$ and that $\int_{W_{k}} |v_{k}|^2 e^{-k \varphi \circ \Phi}\omega'_{n} = O(k^{-m+4})$.  
Let $\beta_{k} = u_{k}\circ \Phi -v_{k}$.  
Then $\beta_{k}$ is a holomorphic function on $W_{k}$.  
For any $\varepsilon > 0$, 
we have 
\begin{align*}
& \int_{W_k}|\beta_k|^2 e^{-k\varphi \circ \Phi} \omega'_n \leq (1 + \varepsilon) \int_{W_k}|u_k \circ \Phi|^2e^{-k\varphi \circ \Phi}\omega_n' + \left(1 + \frac{1}{\varepsilon}\right)\int_{W_k} |v_k|^2 e^{-k\varphi \circ \Phi} \omega_n' \\
\leq & 1 + \varepsilon + \left(1 + \frac{1}{\varepsilon}\right)\int_{W_k} |v_k|^2 e^{-k\varphi \circ \Phi} \omega_n'.   
\end{align*}
Hence, if $m > 4$, we have 
\begin{equation}\label{equation:5} 
\limsup_{k \to +\infty}\int_{W_{k}}|\beta_{k}|^2 e^{-k\varphi \circ \Phi} \omega'_{n}  \leq 1.   
\end{equation}
Our next task is to estimate $|v_{k}|$ on $Y$.  
We first estimate $u_{k}$ near $X$.  
\begin{lemma}\label{lemma:5}
Let $r > 0$ be a sufficiently small number.   
Let $c > 0$ be a positive number which is larger than the $C^1$-norm of $\varphi_{0}$ on a neighborhood of $X$.  
Let $D$ be a differential operator of order one with bounded coefficients.  
Then there exists $\delta > 0$ which does not depend on $r$ and the following estimates hold:  
\begin{itemize}
\item[(i)]  
$\sup_{\{z \in M, d_{X}(z) < \delta r\}} |u_{k}|^2 e^{-k\varphi} \lesssim e^{ckr}/r^{2n}, $

%$ \sup_{B(p, r)} |u_{k}|^2 e^{-k\varphi}  \lesssim  \frac{e^{ckr}}{r^{2n}}.  $
\item[(ii)]  
$\sup_{\{z \in M, d_{X}(z) < \delta r\}} |Du_{k}|^2 e^{-k\varphi} \lesssim e^{4ckr}/r^{2+2n} + 
k^2e^{ckr}/r^{2n}.  $

%$\sup_{B(p, r)} |\partial_{z} u_{k}|^2 \lesssim \frac{e^{3ckr}}{r^{2+2n}} + \frac{ke^{ckr}}{r^{2n}}. $ 
\end{itemize}
\end{lemma}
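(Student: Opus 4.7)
The plan is to pass to a local holomorphic frame of $L$ and convert both bounds into standard consequences of the sub-mean-value inequality and the Cauchy integral formula for holomorphic functions, controlling the weight $e^{-k\varphi}$ at the base point by its values nearby up to an $e^{ckr}$ factor coming from the $C^{1}$-bound on $\varphi_{0}$.

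For (i), fix $p$ with $d_{X}(p) < \delta r$ and choose a local holomorphic frame $s_{0}$ of $L$ together with holomorphic coordinates identifying a neighborhood of $p$ with a polydisc. Writing $f_{k} = F_{k} s_{0}^{k}$ with $F_{k}$ holomorphic and recalling $s = \xi s_{0}$, one has $F_{k} = u_{k} \xi^{k}$ and therefore $|u_{k}|^{2} e^{-k\varphi} = |F_{k}|^{2} e^{-k\varphi_{0}}$. The plurisubharmonicity of $|F_{k}|^{2}$ yields the sub-mean-value inequality on the polydisc $P_{r}(p)$ of radius $r$ about $p$; combining this with the bound $|\varphi_{0}(w) - \varphi_{0}(p)| \leq cr$ on $P_{r}(p)$ gives
\[
|F_{k}(p)|^{2} e^{-k\varphi_{0}(p)} \leq \frac{e^{ckr}}{\mathrm{vol}(P_{r}(p))} \int_{P_{r}(p)} |F_{k}|^{2} e^{-k\varphi_{0}} \, dV \lesssim \frac{e^{ckr}}{r^{2n}} \|f_{k}\|_{h^{k}}^{2},
\]
where the last step uses the bounded comparison between Lebesgue measure and $\omega_{n}$ together with $\|f_{k}\|_{h^{k}} = 1$. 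This yields (i).

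For (ii), decompose $D = D^{(1,0)} + D^{(0,1)}$. The $(0,1)$-part is directly controlled by the identity $\overline{\partial} u_{k} = -k u_{k} \xi^{-1} \overline{\partial}\xi$: since $\xi^{-1} \overline{\partial}\xi$ is bounded near $X$, feeding (i) into this identity produces $|\overline{\partial} u_{k}|^{2} e^{-k\varphi} \lesssim k^{2} e^{ckr}/r^{2n}$. For the $(1,0)$-part, use $\partial u_{k} = \xi^{-k} \partial F_{k} - k u_{k} \xi^{-1} \partial\xi$, which reduces the problem to estimating $|\partial F_{k}(p)|^{2} e^{-k\varphi_{0}(p)}$. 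Applying Cauchy's integral formula to the holomorphic function $F_{k}$ on $P_{r/2}(p)$ gives $|\partial F_{k}(p)|^{2} \lesssim r^{-2} \sup_{P_{r/2}(p)} |F_{k}|^{2}$; bounding the supremum by (i) applied on a slightly larger tube and then converting $e^{-k\varphi_{0}(p)}$ into $\sup_{P_{r/2}(p)} e^{-k\varphi_{0}}$ at the cost of another $e^{ckr}$ factor yields $|\partial F_{k}|^{2} e^{-k\varphi_{0}} \lesssim e^{4ckr}/r^{2n+2}$, after absorbing multiplicative constants into the exponent. Summing the two contributions gives (ii).

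The only delicate point is the uniform choice of $\delta$: it must be fixed once at the outset, independently of $r$, so that for every $p$ with $d_{X}(p) < \delta r$ both the polydisc $P_{r}(p)$ of (i) and the enlarged polydisc of radius $3r/2$ implicit in the Cauchy step for (ii) remain in the fixed neighborhood of $X$ on which $s_{0}, \xi, \varphi_{0}$ have uniformly bounded data. This is arranged by a single shrinking of $\delta$; the constant $c$ can then be taken as any upper bound of the $C^{1}$-norm of $\varphi_{0}$ on this neighborhood, as in the statement of the lemma.
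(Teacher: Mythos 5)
Your proposal is correct and follows essentially the same route as the paper's proof: both rest on the observation that $F_{k} = u_{k}\xi^{k}$ is holomorphic near $X$ with $|u_{k}|^{2}e^{-k\varphi} = |F_{k}|^{2}e^{-k\varphi_{0}}$, apply a sub-mean-value inequality on a shape of scale $r$ together with the $C^{1}$-bound on $\varphi_{0}$ to get (i) from $\|f_{k}\|_{h^{k}}=1$, and then use a Cauchy-type gradient estimate on $F_{k}$ (plus the product rule expressing $Du_{k}$ in terms of $DF_{k}$, $u_{k}$ and $D\xi$) to get (ii). The only cosmetic differences are that you work on polydiscs rather than the paper's Euclidean balls $B_{i}(p,2r)\subset V_{i}'$ centered on $X$, and you split $D$ explicitly into $(1,0)$ and $(0,1)$ parts, using $\overline{\partial}u_{k} = -ku_{k}\xi^{-1}\overline{\partial}\xi$ for the antiholomorphic part; the paper instead bounds $D(u_{k}\xi^{k})$ as a whole and then peels off the $k|u_{k}||D\xi|$ term, which produces the same two contributions.
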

\begin{proof}
We take open sets $\{V_{i}\}_{i=1}^l$, $\{V'_{i}\}_{i=1}^l$ ($V_{i} \subset  \subset V'_{i}$) in $M$ such that $X \subset \bigcup_{i=1}^l V_{i}$ and that there exists a holomorphic coordinate on $V'_{i}$. 
Let $B_{i}(p, r) \subset V'_{i}$ be the Euclidean ball of center $p \in V'_{i}$ and radius $r$ in $V'_{i}$.  
We may assume that $B_{i}(p, 2r) \subset V'_{i}$ for any $p \in X \cap V_{i}$ by taking $r$ sufficiently small.  
Let $p \in X \cap V_{i}$.  
Since $u_{k}\xi^k$ is a holomorphic function, the mean value inequality implies 
\[
\sup_{B_{i}(p, r)} |u_{k}\xi^k|^2e^{-k\varphi_{0}} \lesssim \frac{e^{ckr}}{r^{2n}} \int_{B_{i}(p, 2r)} |u_{k}\xi^k|^2 e^{-k\varphi_{0}}d\mu_{i, \mathrm{Leb}} \lesssim \frac{e^{ckr}}{r^{2n}} 
\]
where $d\mu_{i, \mathrm{Leb}}$ is the Lebesgue measure on $V'_{i}$.  
Since 
$
\sup_{B_{i}(p, r)}|D(u_{k}\xi^k)| \lesssim \frac{1}{r} \sup_{B_{i}(p, 2r)} |u_{k}\xi^k|, 
$
we have 
\[
\sup_{B_{i}(p, r)}|D(u_{k} \xi^k)|^2 e^{-k\varphi_{0}} \lesssim  \frac{e^{2ckr}}{r^2} \sup_{B_{i}(p, 2r)}|u_{k}\xi^k|^2 e^{-k\varphi_{0}}.  
\]
Hence 
\[
\sup_{B_{i}(p, r)} |D u_{k}|^2 e^{-k\varphi} \lesssim \frac{e^{2ckr}}{r^{2}} \sup_{B_{i}(p, 2r)}|u_{k}|^2e^{-k\varphi}  + k^2 \sup_{B_{i}(p, r)}  |u_{k}|^2e^{-k\varphi} 
\lesssim \frac{e^{4ckr}}{r^{2+2n}} + \frac{k^2e^{ckr}}{r^{2n}}.  
\]
If we take $\delta$ sufficiently small, $\bigcup_{i=1}^l \bigcup_{p \in X \cap V_{i}} B_{i}(p, r)$ contains $\{z \in M\,|\, d_{X}(z) < \delta r \}$ for any sufficiently small $r$.  
This completes the proof of (i), (ii).  
\end{proof}

\begin{lemma}\label{lemma:6}
We have 
\[
|v_{k}|^2 = O(k^{2n-2m}) + O(k^{2n-m+4})
\]
uniformly on $Y$.  
\end{lemma}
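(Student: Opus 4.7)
The plan is to convert the weighted $L^2$-bound $\int_{W_k} |v_k|^2 e^{-k\varphi \circ \Phi}\omega'_n = O(k^{-m+4})$ into a pointwise bound on $Y$ via Sobolev embedding and interior elliptic regularity for the inhomogeneous equation $\overline{\partial} v_k = g_k$, where $g_k := \overline{\partial}(u_k \circ \Phi)$. The correct length scale is $r_k = 1/k$, which is chosen so that the weight $e^{-k\varphi \circ \Phi}$ is comparable to $1$ on a ball of this radius.

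Fix $p \in Y$ and set $B_k := B(p, 1/k) \subset N$. By condition~(c) of Proposition~\ref{proposition:1}, $\log|s|_h^2$ vanishes to first order on $X$, so $\varphi = -\log|s|_h^2$ vanishes to second order on $X$; since $\Phi(Y) = X$, the same holds for $\varphi \circ \Phi$ on $Y$. Hence $\varphi \circ \Phi = O(1/k^2)$ on $B_k$, and the unweighted $L^2$-norms of $v_k$ and its derivatives on $B_k$ agree with the weighted ones up to multiplicative constants. In particular $\|v_k\|_{L^2(B_k)}^2 = O(k^{-m+4})$.

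Applying the Sobolev embedding $H^{n+1}(B_k) \hookrightarrow L^\infty(B_k)$ together with interior elliptic regularity for $\overline{\partial}$ (after rescaling $B_k$ to the unit ball) yields
\[
|v_k(p)|^2 \lesssim k^{2n}\|v_k\|_{L^2(B_k)}^2 + \sum_{|\beta| \leq n} k^{2n-2|\beta|-2}\|\nabla^\beta g_k\|_{L^2(B_k)}^2.
\]
The first term gives $O(k^{2n-m+4})$, accounting for one of the two terms in the claim. For the second, I derive pointwise bounds on $\nabla^\beta g_k$ on $B_k$ using: (i)~the vanishing of $\overline{\partial}\Phi$ on $Y$ and of $\overline{\partial}\xi$ on $X$ to order $m$, giving $|\nabla^\delta \overline{\partial}\Phi|, |\nabla^\delta \overline{\partial}\xi| \lesssim k^{|\delta|-m}$ on $B_k$ provided $|\delta| < m$; (ii)~the identity $\overline{\partial} u_k = -k u_k \xi^{-1}\overline{\partial}\xi$, which follows from $\overline{\partial}(u_k \xi^k) = 0$; and (iii)~iterated Cauchy estimates applied to the holomorphic function $u_k \xi^k$, giving $|\nabla^j u_k| \lesssim k^{j+n}$ on $B_k$, an extension of Lemma~\ref{lemma:5}. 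Combining gives $|\nabla^\beta g_k|^2 \lesssim k^{2n+2+2|\beta|-2m}$ pointwise, so $\|\nabla^\beta g_k\|_{L^2(B_k)}^2 \lesssim k^{2+2|\beta|-2m}$; multiplication by the scaling factor $k^{2n-2|\beta|-2}$ produces a uniform $O(k^{2n-2m})$, independent of $|\beta|$.

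The main technical obstacle is the careful bookkeeping of powers of $k$ in the pointwise derivative estimates for $g_k$. As long as $m > n$, the vanishing order of $\overline{\partial}\Phi$ dominates every derivative $|\beta| \leq n$ appearing in the Sobolev embedding, keeping the computation uniform in $|\beta|$. Summing the two contributions produces the advertised bound $|v_k|^2 = O(k^{2n-2m}) + O(k^{2n-m+4})$.
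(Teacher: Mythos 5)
Your proposal reaches the correct bound, and the underlying idea (converting the weighted $L^2$ estimate on $W_k$ into a pointwise estimate at scale $1/k$ using the $\overline\partial$-inhomogeneity and the near-triviality of the weight on balls of that size) is the right one. But the route you take differs from the paper's. The paper applies Lemma~15.1.8 of H\"ormander, which directly gives
\[
|v_k(q)|^2 \lesssim r^2 \sup_{B(q,\delta'r)} |\overline\partial_w v_k|^2 + \frac{1}{r^{2n}}\int_{B(q,\delta'r)}|v_k|^2\,d\mu_{\mathrm{Leb}},
\]
so only the sup of $|g_k| = |\overline\partial(u_k\circ\Phi)|$ on the ball is needed, and this in turn is controlled by exactly the two cases ($|u_k|$ and $|Du_k|$) that Lemma~\ref{lemma:5} already provides. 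Substituting $r = 1/k$ gives both error terms immediately. Your approach instead routes through the Sobolev embedding $H^{n+1}\hookrightarrow L^\infty$ plus interior a~priori estimates on the rescaled ball, which forces you to bound $\|\nabla^\beta g_k\|_{L^2(B_k)}$ for all $|\beta|\le n$. This requires an extension of Lemma~\ref{lemma:5} to all derivatives $|\nabla^j u_k|\lesssim k^{j+n}$, and while this is plausible by iterating Cauchy estimates on the holomorphic function $u_k\xi^k$ and unwinding the Leibniz rule, it is nontrivial bookkeeping that the paper avoids entirely. You also need the additional hypothesis $m>n$ to keep the vanishing-order estimates for $\nabla^\delta\overline\partial\Phi$ and $\nabla^\delta\overline\partial\xi$ valid up to $|\delta|=n$; the paper's route needs no such constraint. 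One small imprecision: $\overline\partial$ on scalar functions is not an elliptic operator, so ``interior elliptic regularity for $\overline\partial$'' should be read as elliptic regularity for $\overline\partial^*\overline\partial$ (a multiple of the Laplacian), yielding $\|\tilde v\|_{H^{n+1}}\lesssim\|\tilde v\|_{L^2}+\|\overline\partial\tilde v\|_{H^{n}}$, which is the inequality your power-counting actually uses.
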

\begin{proof}
Let $q \in Y$ and $W_q \subset N$ be a sufficiently small neighborhood of $q$ and $(w_{1}, \ldots, w_{n})$ be a holomorphic coordinate on $W_q$.  
Let $\delta > 0$ be as in Lemma~\ref{lemma:5}.  
Then there exists $\delta' > 0$ which does not depend on $q \in Y$ such that $\Phi (B(q, \delta' r)) \subset \{z \in M\, |\, d_{X}(z) < \delta r\}$ for any $0 < r << 1$.  
Here $B(q, \delta'r)$ is the Euclidean ball of center $q$ and radius $\delta' r$ in $W_q$.  
We take $r$ to be a positive number which depends only on $k$ such that $B(q, \delta'r) \subset W_{k} = \{z \in N \,|\, d_{Y}(z) < \frac{2\log k}{\sqrt{k}}\}$.  
Let $c' > 0$ be a positive number which is larger than the $C^1$-norm of $\varphi \circ \Phi$ on $W_q$.  
By Lemma~15.1.8 of \cite{Hor}, we have 
\begin{align*}
|v_{k}(q)|^2 &\lesssim (\delta' r)^2 \sup_{B(q, \delta'r)} |\overline{\partial}_{w} v_{k}|^2 + \frac{1}{(\delta'r)^{2n}} \int_{B(q, \delta'r)} |v_{k}|^2 d\mu_{\mathrm{Leb}} \\
& \lesssim r^2 e^{c'\delta'kr} \sup_{B(q, \delta' r)} |\overline{\partial}_{w}v_{k}|^2e^{-k\varphi \circ \Phi} + r^{-2n}e^{c'\delta'kr} k^{-m+4}.  
\end{align*}
Let $(z_{1}, \ldots, z_{n})$ be a holomorphic coordinate on a neighborhood of $\Phi (q)$.  
Then 
\begin{align*}
|\overline{\partial}_{w}v_{k}| = |\overline{\partial}_{w}(u_{k} \circ \Phi)| \leq |\partial_{z} u_{k}| |\overline{\partial}_{w} \Phi| + |\overline{\partial}_{z} u_{k}| |\partial_{w} \Phi| 
\lesssim 
|\partial_{z}u_{k}||\overline{\partial}_{w} \Phi| + k\big|u_{k}\overline{\partial}_{z}\xi\big|.  
\end{align*}
By Lemma~\ref{lemma:5}, we have 
\begin{align*}
& |v_{k}(q)|^2 \\
 \lesssim & r^{-2n} e^{c'\delta' kr} ((e^{4ckr} + r^2k^2 e^{ckr} )\sup_{B(q, \delta' r)}|\overline{\partial}_{w} \Phi|^2 + r^2k^2 e^{ckr}\sup_{B(q, \delta' r)}|\overline{\partial}_{z}\xi|^2 ) + r^{-2n}e^{c'\delta'kr}k^{-m+4} \\
\lesssim &
r^{-2n + 2m}e^{c'\delta' kr}  \left(e^{4ckr} + r^2k^2e^{ckr} \right) + r^{-2n} e^{c' \delta' kr} k^{-m+4}.  
\end{align*}
If we take $r = k^{-1}$, we have $B(q, \delta'r) \subset W_{k}$ for sufficiently large $k$ and 
$|v_{k}(q)|^2 = O(k^{2n-2m}) + O(k^{2n-m+4})$.  
Since $Y$ is compact, the above estimates hold uniformly on $Y$.  
\end{proof}

\section{Monge-Amp\`ere equation and Demailly's Jensen-Lelong formula}\label{section:5}
We use the same notation as in Section~\ref{section:4}.  
Since $N$ is Stein, we may assume that $N$ is a real-analytic submanifold in a higher dimensional Euclidean space.  
Then one can approximate $\varphi \circ \Phi$ by analytic functions in the $C^l$-norm for any $l \in \mathbb{N}$ on a neighborhood of $Y$ (see Lemma~5 of \cite{Whi}).  
Let $\varepsilon > 0$ be a sufficiently small number.  
Let $\varphi_{\varepsilon}$ be a real analytic function on a neighborhood of $Y$ such that 
\begin{equation}\label{equation:7}
|\varphi_{\varepsilon}-\varphi\circ\Phi|_{C^2} < \varepsilon. 
\end{equation} 
%Put $\omega'_{\varepsilon} = \frac{1}{4\pi}dd^c \varphi_{\varepsilon}$.  
Let $ds^2$ be the real-analytic Riemannian metric on $Y$ induced by $\frac{1}{4\pi}dd^c \varphi_{\varepsilon}$.  
By the fundamental result of Guillemin and Stenzel~\cite{Gui-Ste}, 
there exists the real-analytic strictly plurisubharmonic function $\rho$ on a neighborhood of $Y$ such that $0 \leq \rho \leq 1$, $\rho^{-1}(0) = Y$, the K\"ahler form $\frac{1}{4\pi}dd^c \rho$ induces the Riemannian metric $ds^2$ on $Y$ and that 
$
(dd^c \sqrt{\rho})^n = 0
$
outside $Y$.  
We note that $\rho$ depends on $\varepsilon > 0$.  
By Shrinking $N$ if necessary, we may assume that $\rho$ is defined on $N$.  
Since $\rho$ is strictly plurisubharmonic and $\sqrt{\rho}$ is a solution of the Monge-Amp\`ere equation, $\partial \overline{\partial}\sqrt{\rho}$ has $(n-1)$ positive eigenvalues and does not have negative eigenvalues on $N \setminus Y$.  
Because $\sqrt{\rho}$ attains its minimum at $Y$, 
we have that $\sqrt{\rho}$ is plurisubharmonic on $N$ (cf.~Theorem~5.6 of \cite{Lem-Szo}).  
Although $\sqrt{\rho}$ is continuous and not of class $C^2$ on $N$, the Monge-Amp\`ere measure $(dd^c \sqrt{\rho})^n$ can still be defined.  
See Chapter~III, Section~3 of \cite{Dem} for the definition of the Monge-Amp\`ere measure.  
Since $(dd^c \sqrt{\rho})^n = 0$ on $N \setminus Y$, the support of the measure $(dd^c \sqrt{\rho})^n$ is contained in $Y$.  

Put $\omega_{\rho} = \frac{1}{4\pi}dd^c \rho$.  
Let $q \in Y$ and $W_{q} \subset N$ be a small open neighborhood of $q$.  
Let $x = (x_{1}, \ldots, x_{n})$ be a smooth coordinate on $W_{q} \cap Y$ and $\{e_{1}, \ldots, e_{n}\}$ be an orthonormal frame of $TY$ on $W_{q} \cap Y$ with respect to the Riemannian metric induced by $\omega_{\rho}$.  
Let $J_{N} \in \mathrm{End} (TN)$ be the complex structure of $N$.  
Then $(J_{N}e_{1}, \ldots, J_{N}e_{n})$ induces a frame of the normal bundle $TN/TY$ on $W_{q} \cap Y$.  
We have $\langle e_{1}, \ldots, e_{n}\rangle^{\bot} = \langle J_{N}e_{1}, \ldots, J_{N}e_{n}\rangle$.  
Let $(x, y) = (x_{1}, \ldots, x_{n}, y_{1}, \ldots, y_{n})$ be the coordinate on $W_q$ associated to $(J_{N}e_{1}, \ldots, J_{N}e_{n})$, 
that is, $(x_{1}, \ldots, x_{n}, 0, \ldots, 0)$ corresponds to a point in $W_{q} \cap Y$ and 
$(x_{1}, \ldots, x_{n}, y_{1}, \ldots, y_{n})$ corresponds to $\exp_{(x, 0)} (y_{1}J_{N}e_{1} + \cdots + y_{n}J_{N}e_{n}) \in N$.  
Then 
$
\rho (x, y) = 2\pi \sum_{i=1}^n y_{i}^2 + O(|y|^3)
$
on $W_q$ 
since $\rho$ attains its minimum at $Y$, and $2\pi dd^c \rho (\partial/\partial y_i, J_N \partial /\partial y_j)(x, 0) = \delta_{ij}$.  
On the other hand, we have 
\[
\varphi \circ \Phi (x, y) = \sum_{i=1}^{n}\sum_{j=1}^n a_{ij}(x) y_{i}y_{j} + O(|y|^3)
\]
where $a_{ij}(x) = \frac{1}{2}dd^c (\rho\circ \Phi) (\partial/\partial y_i, J_N\partial/\partial y_j) =2\pi \omega' (\partial/\partial y_{i}, J_N \partial/\partial y_{j})(x, 0)$.  
Here $\omega' = \frac{1}{4\pi}dd^c (\varphi \circ \Phi)$. 
By (\ref{equation:7}), we have 
$
|a_{ij} - 2\pi dd^c \varphi_{\varepsilon}(\partial/\partial y_{i}, J_N \partial/\partial y_{j}) | \lesssim \varepsilon, 
$
and the left-hand side is equal to 
\begin{align*}
& |a_{ij} - \frac{1}{2}ds^2(J_N\partial/\partial y_{i}, J_N \partial/\partial y_{j})| = |a_{ij}- 2\pi \omega_{\rho}(\partial/\partial y_{i}, J_N \partial/\partial y_{j})| 
= |a_{ij} -  2\pi \delta_{ij}|.  
\end{align*}
Then there exist $c_{1}, c_{2}, c_{3} > 0$ which do not depend on $k$ or $\varepsilon$ such that 
\begin{equation}\label{equation:8}
(1-c_{1}\varepsilon) \varphi \circ \Phi \leq \rho \leq (1+c_{1}\varepsilon) \varphi \circ \Phi,  
\end{equation}
\begin{equation}\label{equation:9}
(1-c_{2}\varepsilon) \omega'_{n} \leq \omega_{\rho, n} \leq (1+c_{2}\varepsilon) \omega'_{n},
\end{equation}
\begin{equation}\label{equation:10}
(1-c_{3}\varepsilon) \mathrm{Vol}(X, \omega) \leq \mathrm{Vol}(Y, \omega_{\rho}) \leq (1+c_{3}\varepsilon) \mathrm{Vol}(X, \omega)
\end{equation}
on a neighborhood of $Y$.   
Here $\omega_{\rho, n} = \omega_{\rho}^n/n!$ and $\mathrm{Vol}(Y, \omega_{\rho})$ is defined by the integral of the Riemannian density induced by $\omega_{\rho}$ on $Y$.  
We define $B_{\sqrt{\rho}}(r) = \{z \in N\, |\, \sqrt{\rho}(z) \leq r \}$, $S_{\sqrt{\rho}}(r) = \{z \in N\, |\, \sqrt{\rho}(z) = r \}$.  
For sufficiently large $k \in \mathbb{N}$, we have $B_{\sqrt{\rho}}(\frac{\sqrt{2\pi}\log k}{\sqrt{k}}) \subset W_{k} = \{z \in N\,|\, d_{Y}(z) < \frac{2\log k}{\sqrt{k}}\}$.  
Now we introduce Demailly's Jensen-Lelong formula: 
\begin{theorem}[(6.5) of \cite{Dem}]\label{theorem:DJL}
Let $N$ be a Stein manifold and $\phi$ be a continuous plurisubharmonic function on $N$.  
Assume that the sublevel set $B_{\phi}(r) = \{z \in N\, |\,  \phi(z) < r\}$ is relatively compact for any $r < \sup_{N}\phi$.  
Then 
\[
\int u d\mu_{r} - \int_{B_{\phi}(r)} u (dd^c \phi)^n = \int_{-\infty}^{r} dt \int_{B_{\phi}(t)}  dd^c u \wedge (dd^c \phi)^{n-1} 
\]
for any $u \in C^{\infty}(N)$.  
Here $d\mu_r$ is a measure whose support is contained in $\{z \in N\, |\, \phi(z) = r\}$.  
If $\phi$ is smooth and $d\phi \neq 0$ on a neighborhood of $\partial B_{\phi}(r)$, 
$d\mu_r$  is equal to the pullback of $d^c \phi \wedge (dd^c\phi)^{n-1}$ by the inclusion map from $\partial B_{\phi}(r)$ to $N$.   
\end{theorem}
By Demailly's Jensen-Lelong formula, 
we have 
\begin{align*}
\inf_{y \in Y} |\beta_{k}(y)|^2 \int_{S_{\sqrt{\rho}}(r)} d^c \sqrt{\rho} \wedge (dd^c \sqrt{\rho})^{n-1} 
& = \inf_{y \in Y} |\beta_{k}(y)|^2 \int_{Y} (dd^c \sqrt{\rho})^n \leq \int_{Y} |\beta_{k}|^2 (dd^c \sqrt{\rho})^n \\ 
& \leq \int_{S_{\sqrt{\rho}}(r)} |\beta_{k}|^2 d^c \sqrt{\rho} \wedge (dd^c \sqrt{\rho})^{n-1}    
\end{align*}
for $0 < r < \frac{\log k}{\sqrt{k}}$.  
Here $(dd^c \sqrt{\rho})^n$ is the Monge-Amp\`ere measure whose support is contained in $Y$.  
Since $d^c \sqrt{\rho} = \rho^{-1/2}\frac{d^c \rho}{2}$ and 
$dd^c \sqrt{\rho} = \rho^{-1/2}\frac{dd^c \rho}{2}-\rho^{-3/2}\frac{d\rho \wedge d^c\rho}{4}$, 
we obtain 
\[
\inf_{y \in Y} |\beta_{k}(y)|^2 \int_{S_{\sqrt{\rho}}(r)}  \rho^{-n/2}d^{c} \rho \wedge (dd^c \rho)^{n-1} \leq \int_{S_{\sqrt{\rho}}(r)} |\beta_{k}|^2 \rho^{-n/2}d^c \rho \wedge (dd^c \rho)^{n-1} .  
\]
Since $\rho$ is constant on $S_{\sqrt{\rho}}(r)$, we have 
\[
\inf_{y \in Y} |\beta_{k}(y)|^2 \int_{S_{\sqrt{\rho}}(r)} e^{-kb\rho} \rho^{-1/2}d^{c} \rho \wedge (dd^c \rho)^{n-1} \leq \int_{S_{\sqrt{\rho}}(r)} |\beta_{k}|^2 e^{-k b\rho} \rho^{-1/2}d^c \rho \wedge (dd^c \rho)^{n-1} 
\]
for any $b > 0$.  
Thus, 
\begin{align*}
& \inf_{y \in Y} |\beta_{k}(y)|^2 \int_{B_{\sqrt{\rho}}(\frac{\sqrt{2\pi}\log k}{\sqrt{k}})} e^{-kb\rho} \rho^{-1}d\rho \wedge d^{c} \rho \wedge (dd^c \rho)^{n-1} \\
= & 2\inf_{y \in Y} |\beta_{k}(y)|^2 \int_{0}^{\frac{\sqrt{2\pi}\log k}{\sqrt{k}}} dr \int_{S_{\sqrt{\rho}}(r)} e^{-kb\rho} \rho^{-1/2} d^c \rho \wedge (dd^c \rho)^{n-1} \\
\leq & 2\int_{0}^{\frac{\sqrt{2\pi}\log k}{\sqrt{k}}} dr \int_{S_{\sqrt{\rho}}(r)} |\beta_{k}|^2 e^{-kb\rho}\rho^{-1/2} d^c \rho \wedge (dd^c \rho)^{n-1} \\
= & \int_{B_{\sqrt{\rho}}(\frac{\sqrt{2\pi}\log k}{\sqrt{k}})} |\beta_{k}|^2 e^{-kb\rho} \rho^{-1} d\rho \wedge d^c \rho \wedge (dd^{c}\rho)^{n-1}.  
\end{align*}
Since $(dd^c \sqrt{\rho})^n = 0$, 
we have 
\[
n \rho^{-1}d\rho \wedge d^c \rho \wedge (dd^c \rho)^{n-1} =  2 (dd^c \rho)^n.
\]
Hence we have 
\begin{equation}\label{equation:11}
\inf_{y \in Y} |\beta_{k}(y)|^2 \int_{B_{\sqrt{\rho}}(\frac{\sqrt{2\pi}\log k}{\sqrt{k}})} e^{-kb\rho} \omega_{\rho, n} 
\leq \int_{B_{\sqrt{\rho}}(\frac{\sqrt{2\pi}\log k}{\sqrt{k}})} |\beta_{k}|^2 e^{-kb\rho} \omega_{\rho, n}.
\end{equation}

\begin{lemma}\label{lemma:7}
\[
\int_{B_{\sqrt{\rho}}(\frac{\sqrt{2\pi}\log k}{\sqrt{k}})} e^{-kb\rho} \omega_{\rho, n} \sim \frac{\mathrm{Vol}(Y, \omega_{\rho})}{(2kb)^{n/2}} \qquad (k \to +\infty).  
\]
\end{lemma}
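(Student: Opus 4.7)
The plan is to localize near the compact Lagrangian $Y$, use the normal coordinates $(x, y)$ introduced just above the lemma, and reduce the integral to an $n$-dimensional Gaussian integral in the normal directions. First, by compactness of $Y$, I would cover $Y$ by finitely many coordinate neighborhoods $W_{q_i}$ of the kind set up above, with a subordinate partition of unity $\{\chi_i\}$, chosen so that the tubular neighborhood $\bigcup_i W_{q_i}$ contains $B_{\sqrt{\rho}}(r_k)$ for all sufficiently large $k$, where $r_k = \sqrt{2\pi}\log k/\sqrt{k}$. In each chart I would use the two expansions already recorded in the text:
\[
\rho(x, y) = 2\pi |y|^2 + O(|y|^3), \qquad \omega_{\rho, n}(x, y) = \bigl(1 + O(|y|)\bigr)\,\sqrt{\det g_Y(x)}\, dx \wedge dy,
\]
where $dV_Y(x) = \sqrt{\det g_Y(x)}\, dx$ is the Riemannian density on $Y$ induced by $\omega_\rho$; the second identity follows from the orthogonal decomposition $T_pN = T_pY \oplus J_N T_pY$ for $p \in Y$ together with the fact that $\partial/\partial y_j = J_N e_j$ is orthonormal along $Y$.

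On the domain of integration one has $|y| \lesssim \log k/\sqrt{k}$, hence $kb \cdot O(|y|^3) = O(b(\log k)^3/\sqrt{k}) = o(1)$, so $e^{-kb\rho} = e^{-2\pi kb|y|^2}(1 + o(1))$ uniformly in $x$; similarly the factor $1 + O(|y|)$ in $\omega_{\rho, n}$ is $1 + o(1)$. Moreover, the symmetric difference between $B_{\sqrt{\rho}}(r_k)$ and the Euclidean slab $\{(x, y) : x \in Y,\ |y| \leq \log k/\sqrt{k}\}$ lies in a shell where $|y| \sim \log k/\sqrt{k}$, on which the Gaussian weight is already $O(k^{-\infty})$. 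Each local contribution therefore equals
\[
(1 + o(1)) \int_{W_{q_i} \cap Y} \chi_i\, dV_Y \cdot \int_{|y| \leq \log k/\sqrt{k}} e^{-2\pi kb|y|^2}\, dy.
\]

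Finally I would compute the Gaussian: the full integral on $\mathbb{R}^n$ factors via Fubini into $n$ copies of $\int_{\mathbb{R}} e^{-2\pi kb t^2}\, dt = (2kb)^{-1/2}$, yielding $(2kb)^{-n/2}$, while the omitted tail, after the rescaling $u = \sqrt{2\pi kb}\, y$, becomes $(2kb)^{-n/2} \int_{|u| > \sqrt{2\pi b}\log k} e^{-|u|^2}\, du$, which is $(2kb)^{-n/2} \cdot O(k^{-\infty})$ by the standard Gaussian tail bound. Summing over $i$ collapses the partition of unity and produces $\mathrm{Vol}(Y, \omega_\rho)\,(2kb)^{-n/2}(1 + o(1))$, as claimed. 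The main obstacle is purely book-keeping: ensuring the remainders $O(|y|)$ and $O(|y|^3)$ are uniform in $x$ (immediate from compactness of $Y$ and the finite cover) and that the mismatch between the geodesic ball $B_{\sqrt{\rho}}(r_k)$ and the Euclidean slab is negligible, which is why the logarithmic radius $r_k = \sqrt{2\pi}\log k/\sqrt{k}$ was chosen — it makes the Gaussian tails super-polynomially small while still keeping $kb \cdot |y|^3$ small.
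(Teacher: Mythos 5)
Your proof is correct and follows essentially the same route as the paper: a finite cover of $Y$ with a partition of unity, the expansion $\rho(x,y) = 2\pi|y|^2 + O(|y|^3)$ in the adapted normal coordinates, the volume-form expansion $\omega_{\rho,n} = (1+O(|y|))\,d\mu_Y\,dy$, the observation that $kb\,O(|y|^3) = o(1)$ on the logarithmic-radius ball, comparison of $B_{\sqrt{\rho}}(r_k)$ with a Euclidean slab up to a shell where the Gaussian weight is $O(k^{-\infty})$, and the rescaled Gaussian integral yielding $(2kb)^{-n/2}$. The paper packages the slab comparison with an explicit constant $c_4$ producing inner and outer slabs, but this is the same bookkeeping you describe.
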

\begin{proof}
Let $\{q_{j}\}_{j = 1}^{l} \subset Y$ and $W_{j} \subset N$ be a small neighborhood of $q_{j}$ such that $Y \subset \bigcup_{j=1}^l W_{j}$ and that there exist 
non-negative smooth functions $\lambda_{j} \in C^{\infty}_{0}(W_{j})$ which satisfy $\sum_{j=1}^{l}\lambda_{j} = 1$ on a neighborhood of $Y$.  
We take a smooth coordinate $(x, y)$ on $W_j$ as in the first part of this section.  
Since $\rho(x, y) = 2\pi \sum_{i=1}^{n} y_{i}^2 + O(|y|^3)$, 
there exists $c_{4} > 1$ which does not depend on $k$ and satisfies 
\[
\{z \in W_j \, |\, |y(z)| < c_{4}^{-1}\frac{\log k}{\sqrt{k}} \} \subset   B_{\sqrt{\rho}}(\frac{\sqrt{2\pi}\log k}{\sqrt{k}}) \subset \{z \in W_j \, |\, |y(z)| < c_{4}\frac{\log k}{\sqrt{k}}\}
\]
for sufficiently large $k$.  
We have 
\begin{align*}
& \int_{|y| < c_{4}^{-1} \frac{\log k}{\sqrt{k}}} \lambda_{j}e^{-kb\rho}\omega_{\rho, n}  = \int_{|y| < c_{4}^{-1} \frac{\log k}{\sqrt{k}}} \lambda_{j}(x, y) e^{-2\pi b k|y|^2 + kO(|y|^3)} (1+ O(|y|)) d\mu_{Y}dy \\
= & \frac{1}{k^{n/2}}\int_{|y| < c_{4}^{-1}\log k} \lambda_{j}(x, \frac{y}{\sqrt{k}}) e^{-2\pi b |y|^2} \left(1 + O\left(\frac{(\log k)^3}{\sqrt{k}}\right) \right) d\mu_{Y}dy
\end{align*}
where $d\mu_{Y}$ is the Riemannian density on $Y$ induced by $\omega_{\rho}$.  
Since 
\[
\int_{c_{4}^{-1} \log k \leq |y| < c_4 \log k}\lambda_{j}(x, 0) e^{-2\pi b |y|^2} d\mu_{Y}dy = O(k^{-\infty}), 
\]
it follows that 
\begin{align*}
\int_{B_{\sqrt{\rho}}(\frac{\sqrt{2\pi} \log k}{\sqrt{k}})} \lambda_{j} e^{-kb\rho}\omega_{\rho, n} 
= &\frac{1}{k^{n/2}} \left(1 + O\left(\frac{(\log k)^3}{\sqrt{k}}\right) \right) \int \lambda_{j}(x, 0) e^{-2\pi b |y|^2} d\mu_{Y} dy + O(k^{-\infty}) \\
= & \frac{1}{(2kb)^{n/2}} \left(1 + O\left(\frac{(\log k)^3}{\sqrt{k}}\right) \right)  \int \lambda_{j}(x, 0) d\mu_{Y} + O(k^{-\infty}).  
\end{align*}
Hence we obtain $\int_{B_{\sqrt{\rho}}(\frac{\sqrt{2\pi} \log k}{\sqrt{k}})} e^{-kb\rho}\omega_{\rho, n} = \frac{1}{(2kb)^{n/2}} \left(1 + O\left(\frac{(\log k)^3}{\sqrt{k}}\right) \right) \mathrm{Vol}(Y, \omega_{\rho})$.  
\end{proof}
Now put $b = \frac{1}{1-c_{1}\varepsilon}$.  
By (\ref{equation:5}), (\ref{equation:8}) and (\ref{equation:9}), we have 
\[
\limsup_{k \to +\infty}\int_{B_{\sqrt{\rho}}(\frac{\sqrt{2\pi} \log k}{\sqrt{k}})} |\beta_{k}|^2 e^{-kb \rho} \omega_{\rho, n} \leq 1 + c_{2}\varepsilon.  
\]
Then (\ref{equation:10}), (\ref{equation:11}) and Lemma~\ref{lemma:7} imply 
\[
\limsup_{k \to \infty} \inf_{y \in Y} |\beta_{k}(y)|^2 \frac{\mathrm{Vol}(X, \omega)}{(2k)^{n/2}} \leq \frac{1 + c_2 \varepsilon}{(1-c_1 \varepsilon)^{n/2}(1-c_3 \varepsilon)}.  
\]
By Lemma~\ref{lemma:6}, we have 
\begin{align*}
\limsup_{k \to +\infty}\frac{\inf_{x \in X}|f_k(x)|^2_{h^k}\mathrm{Vol}(X, \omega)}{(2k)^{n/2}} \leq &\limsup_{k \to +\infty}\inf_{y \in Y} \left((1+ \varepsilon) |\beta_k(y)|^2 + \left(1 + \frac{1}{\varepsilon} \right)|v_{k}(y)|^2\right) \frac{\mathrm{Vol}(X, \omega)}{(2k)^{n/2}} \\
\leq & \frac{(1 + \varepsilon)(1+c_2 \varepsilon)}{(1-c_1 \varepsilon)^{n/2}(1-c_3\varepsilon)}
\end{align*}
for sufficiently large $m$.  
This completes the proof of Theorem~\ref{theorem:1} since $\varepsilon$ is any positive number.  

\section{Estimate from below}\label{section:3}
Let $M$ be a complex manifold of dimension $n$.  
Let $X \subset M$ be a compact Lagrangian submanifold of $(M, \omega)$ such that $(X, \nabla^{X})$ satisfies the Bohr-Sommerfeld condition.  
We take $U \subset M$, $m \in \mathbb{N}$, $s_0 \in H^{0}(U, L)$, $s \in C^{\infty}(U, L)$ and $\xi, \varphi_{0}, \varphi \in C^{\infty}(U)$ as in Section~\ref{section:4}.  
Let $p \in X$ and $V \subset M$ be a small neighborhood of $p$.  
We take a smooth coordinate $x = (x_{1}, \ldots, x_{n})$ on $V \cap X$ and take a local frame $(e_{1}, \ldots, e_{n})$ of $TX$ on $V \cap X$.  
We take a local coordinate system $(x, y) = (x_{1}, \ldots, x_{n}, y_{1}, \ldots, y_{n})$ as in the first part of Section~\ref{section:5}.  
Just as in the case of $\rho$, 
we have 
$\varphi(x, y) = 2\pi \sum_{i=1}^n y_i^2 + O(|y|^3)$.  
Let $d_{X}(z)$ be the distance from $z \in M$ to $X$.  
Let $\chi \in C^{\infty}(\mathbb{R})$ be a function such that $0 \leq \chi \leq 1$, $\chi = 1$ on $(-\infty, 1/2]$ and that $\chi = 0$ on $[1, +\infty)$.  
Define $\chi_{k}(z) = \chi(\frac{\sqrt{k}d_{X}(z)}{\log k})$ for $z \in M$.   
Put $s_{k} = \chi_{k} s^k \in C^{\infty}(U, L^k)$ for $k \in \mathbb{N}$.  
\begin{lemma}\label{lemma:1}
\[
\|\nabla'' s_{k}\|^2_{h^k, \omega} = O(k^{2-m}).  
\]
\end{lemma}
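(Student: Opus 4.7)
The plan is to apply the Leibniz rule to $s_k = \chi_k s^k$ and split
\[
\nabla'' s_k = (\overline{\partial}\chi_k)\, s^k + k\chi_k\, s^{k-1} \nabla''s,
\]
so that
\[
|\nabla'' s_k|^2_{h^k,\omega} \lesssim |\overline{\partial}\chi_k|^2\, e^{-k\varphi} + k^2\chi_k^2\, e^{-(k-1)\varphi}\,|\nabla''s|^2_{h,\omega}.
\]
The two summands are controlled by very different mechanisms: the cutoff term will be exponentially small because $e^{-k\varphi}$ decays rapidly on the annular support of $\overline{\partial}\chi_k$, while the main term will give the claimed polynomial decay because $\nabla'' s$ vanishes to high order along $X$ by condition (c) of Proposition~\ref{proposition:1}.

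For the cutoff piece, $\overline{\partial}\chi_k$ is supported where $\log k/(2\sqrt k)\le d_X(z)\le \log k/\sqrt k$, and on that region $|\overline{\partial}\chi_k|^2 \lesssim k/(\log k)^2$. Combined with the expansion $\varphi(x,y)=2\pi|y|^2+O(|y|^3)$ recorded just before the lemma (and $d_X\asymp |y|$), this forces $\varphi\gtrsim (\log k)^2/k$ throughout the support, hence $e^{-k\varphi}=O(k^{-\infty})$. Since the support has bounded volume, the whole contribution is $O(k^{-\infty})$.

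For the main piece, condition (c) of Proposition~\ref{proposition:1} forces $\nabla'' s$ to vanish to order $m$ along $X$, so Taylor's theorem yields $|\nabla''s|^2_{h,\omega} \lesssim d_X^{2m}$ in a tubular neighborhood of $X$. Working in the tubular coordinates $(x,y)$, using $\varphi \gtrsim |y|^2$ to pick up the Gaussian factor $e^{-(k-1)\varphi}\lesssim e^{-ck|y|^2}$, and rescaling $y = u/\sqrt k$, the resulting integral reduces to a bounded Gaussian moment:
\[
k^2 \int_X d\mu_X \int_{|y|\le \log k/\sqrt k} |y|^{2m}\, e^{-ck|y|^2}\, dy \;\lesssim\; k^{2-m-n/2},
\]
which is in particular $O(k^{2-m})$.

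The only technical point worth flagging is the uniform comparison $\varphi \asymp d_X^2$ on a neighborhood of $X$, which underlies both estimates above; it follows from the one-jet vanishing of $\log|s|^2_h$ along $X$ (condition (c)) combined with the strict positivity of $dd^c\varphi$ in the directions normal to the Lagrangian $X$. Beyond that, the argument is a routine cutoff-plus-Gaussian bookkeeping.
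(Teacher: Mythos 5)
Your proposal is correct and follows essentially the same route as the paper: split $\nabla'' s_k$ by Leibniz into a cutoff-derivative term and a main term, show the first is $O(k^{-\infty})$ using the Gaussian decay of $e^{-k\varphi}$ on the annular support of $\overline{\partial}\chi_k$, and bound the second by the order-$m$ vanishing of $\nabla'' s$ along $X$ (the paper phrases this as $\overline{\partial}\xi$ vanishing to order $m$, with $s = \xi s_0$) together with the quadratic comparison $\varphi \asymp d_X^2$ coming from the one-jet vanishing of $\log|s|^2_h$. The Gaussian rescaling you perform even yields the slightly sharper exponent $k^{2-m-n/2}$, which of course implies the stated $O(k^{2-m})$.
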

\begin{proof}
Assume that $k \in \mathbb{N}$ is sufficiently large.  
We have 
\begin{align*}
|\nabla'' s_{k}|^2_{h^k, \omega} & = |\xi^k \overline{\partial}\chi_{k} +k\chi_{k}\xi^{k-1}\overline{\partial}\xi|^2_{\omega} e^{-k\varphi_{0}} \lesssim |\overline{\partial}\chi_{k}|^2_{\omega}e^{-k\varphi} + k^2|\chi_{k}\overline{\partial}\xi|^2_{\omega}e^{-k\varphi}.  
\end{align*}
It follows that 
\begin{align*}
& \int_{V} |\overline{\partial}\chi_{k}|^2_{\omega}e^{-k\varphi} \omega_{n} \lesssim \left(\frac{\sqrt{k}}{\log k}\right)^2 \int_{\frac{\log k}{2\sqrt{k}}< |y| < \frac{\log k}{\sqrt{k}}, (x, y) \in V} e^{-2k\pi|y|^2} dxdy \\
\lesssim & \left(\frac{\log k}{\sqrt{k}}\right)^{n-2}e^{-\pi (\log k)^2/2} 
= O(k^{-\infty})  
\end{align*}
and 
\begin{align*}
k^2 \int_{V} |\chi_{k} \overline{\partial}\xi|^2_{\omega}e^{-k\varphi} \omega_{n} 
\lesssim k^2 \int_{|y|< \frac{\log k}{\sqrt{k}}, (x, y) \in V} |y|^{2m} e^{-2k\pi|y|^2}dxdy = O(k^{2-m}).  
\end{align*}
The last equality holds by the boundedness of $|\sqrt{k}y|^{2m}e^{-\pi|\sqrt{k}y|^2}$.  
The lemma holds since $X$ is compact.  
\end{proof}

Let $A$ be a finite sequence of points in $M \setminus X$ (possibly empty).  
Assume that $A$ consists of $a_1, \ldots, a_N \in M \setminus X$ ($a_i \neq a_j$ if $i \neq j$) and $a_i$ occurs $l_i$ times in $A$.  
Let $U_{i} \subset M$ be a small neighborhood of $a_i$.  
We may assume that the support of $\chi_{k}$ does not intersect $U_{i}$ for any $i$ and $k$.  
Let $0 \leq \kappa_i \leq 1$ be a smooth function on $M$ such that the support of $\kappa_i$ is contained in $U_i$ and $\kappa_i = 1$ on a neighborhood of $a_i$.  
Let $z = (z_1, \ldots, z_n)$ be a holomorphic coordinate on $U_i$ such that $a_i$ corresponds to the origin. 
Then we put $\tau_{i}(z) = \kappa_i (z) (n+l_i-1)\log |z|^2$.  
By taking $U_i$ small, we have $\tau_{i} \leq 0$.  
We define a singular Hermitian metric $h'_{k}$ of $L^k$ by $h'_{k} = h^k e^{-\sum_{i=1}^{N} \tau_{i}}$.  
%revised
%
Observe that if a holomorphic section $g$ of $L^k$ satisfies $\|g\|_{h'_{k}} < \infty$, then $g$ vanishes to order $l_i$ at $a_i$.   
The Chern form induced by $h'_{k}$ is positive in the sense of currents if $k$ is sufficiently large.  
Lemma~\ref{lemma:1} shows that $\| \nabla'' s_{k}\|^2_{h'_{k}, \omega} = O(k^{2-m})$.  
(Note that $h^k = h'_{k}$ on the support of $s_k$.)

Now we assume that $M$ satisfies one of the three conditions in Theorem~\ref{theorem:3}.  
We regard $s_{k}$ as an $L^k \otimes \bigwedge^{n} T^{(1, 0)}M$-valued $(n, 0)$-form.  
By using Theorem~\ref{theorem:4}, 
we show that there exists $t_{k} \in C^{\infty}(M, L^k)$ such that $\nabla'' t_{k} = \nabla'' s_{k}$ and $\|t_k\|^2_{h^k} \leq \|t_{k}\|^2_{h'_k} \lesssim \|\nabla'' s_{k}\|^2_{h'_k, \omega}$ for sufficiently large $k$.   
If $M$ is a pseudoconvex domain in $\mathbb{C}^n$, $\bigwedge^n T^{(1, 0)}$ is a trivial line bundle and we can use Theorem~\ref{theorem:4} directly.  
If $M$ is a projective (resp.~Stein) manifold, we need to deal with the difference of the Chern form of $L^k$ and $L^k \otimes \bigwedge^n T^{(1, 0)} M$ to verify whether the condition (\ref{inequality:1}) holds.  
However the compactness (resp.~ the condition $Ric (\omega) \geq -C\omega$) implies (\ref{inequality:1}) and gives the solution of $\overline{\partial}$-equation with the desired estimate for sufficiently large $k$.  
We define a holomorphic section $\alpha_{k} = s_{k} - t_{k}$.  
%revised
%
Since $\|\alpha_k\|^2_{h'_{k}} \leq 2(\|s_k\|^2_{h'_k} + \|t_k\|^2_{h'_k}) < + \infty$, $\alpha_k$ vanishes to order $l_{i}$ at $a_i$ and 
$\alpha_k \in H^{0}_{(2), A} (M, L^k)$.   
\begin{lemma}\label{lemma:2}
If $m > n/2 + 2$, we have 
\[
\|\alpha_{k}\|^2_{h^k} \sim \frac{\mathrm{Vol}(X, \omega)}{(2k)^{n/2}} \quad (k \to +\infty).  
\]
\end{lemma}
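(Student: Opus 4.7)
The plan is to reduce the computation of $\|\alpha_k\|^2_{h^k}$ to the dominant term $\|s_k\|^2_{h^k}$, and then to evaluate the latter by a standard Gaussian / Laplace method using the normal coordinates $(x,y)$ already set up at the beginning of Section~\ref{section:3}. Write $\alpha_k = s_k - t_k$; by the triangle inequality
\[
\bigl| \|\alpha_k\|_{h^k} - \|s_k\|_{h^k} \bigr| \leq \|t_k\|_{h^k}.
\]
Since $\|t_k\|^2_{h^k} \leq \|t_k\|^2_{h'_k} \lesssim \|\nabla'' s_k\|^2_{h'_k,\omega} = O(k^{2-m})$ by the $L^2$ existence argument above, and since $m > n/2 + 2$, this error is $o(k^{-n/2})$. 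Hence the lemma reduces to showing $\|s_k\|^2_{h^k} \sim \mathrm{Vol}(X,\omega)/(2k)^{n/2}$.

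To evaluate $\|s_k\|^2_{h^k} = \int_M \chi_k^2\, e^{-k\varphi}\, \omega_n$, I would choose a finite cover $\{V_i\}$ of $X$ with coordinates $(x,y) = (x_1,\ldots,x_n,y_1,\ldots,y_n)$ of the type introduced just before the statement of Lemma~\ref{lemma:2}, together with a subordinate partition of unity $\{\lambda_i\}$ satisfying $\sum_i \lambda_i = 1$ on a neighborhood of $X$ that contains $\mathrm{supp}(\chi_k)$ for all large $k$. Because the frame $(e_1,\ldots,e_n,J e_1,\ldots, J e_n)$ is orthonormal at $y=0$, the volume form satisfies $\omega_n = (1 + O(|y|))\,dx\,dy$ and, by construction, $\varphi(x,y) = 2\pi |y|^2 + O(|y|^3)$. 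The cutoff $\chi_k$ equals $1$ on $\{|y| \lesssim \log k / (2\sqrt k)\}$ and vanishes outside $\{|y| \lesssim \log k/\sqrt k\}$, so on $\mathrm{supp}(\chi_k)$ one has $k|y|^3 \lesssim (\log k)^3/\sqrt k = o(1)$.

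The main step is then the rescaling $y = u/\sqrt k$, which mimics the computation in Lemma~\ref{lemma:7}. In each coordinate patch,
\[
\int \lambda_i\, \chi_k^2\, e^{-k\varphi}\, \omega_n
= \frac{1}{k^{n/2}} \int \lambda_i\bigl(x, u/\sqrt k\bigr)\, \chi_k(x,u/\sqrt k)^2\, e^{-2\pi |u|^2}\bigl(1 + O((\log k)^3/\sqrt k)\bigr)\, dx\, du.
\]
As $k \to \infty$, dominated convergence, together with the elementary integral $\int_{\mathbb R^n} e^{-2\pi|u|^2}\,du = 2^{-n/2}$, gives
\[
\int \lambda_i\, \chi_k^2\, e^{-k\varphi}\, \omega_n \;\sim\; \frac{1}{(2k)^{n/2}} \int \lambda_i(x,0)\, d\mu_X.
\]
Summing over $i$ yields $\|s_k\|^2_{h^k} \sim \mathrm{Vol}(X,\omega)/(2k)^{n/2}$.

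Finally I combine: $\|\alpha_k\|^2_{h^k} = \|s_k\|^2_{h^k} + O(\|s_k\|_{h^k}\|t_k\|_{h^k}) + O(\|t_k\|^2_{h^k})$, and both error terms are $o(k^{-n/2})$ when $m > n/2 + 2$, since $\|s_k\|_{h^k}\|t_k\|_{h^k} = O(k^{-n/4}) \cdot O(k^{1-m/2}) = O(k^{1-m/2-n/4})$. The main obstacle is the routine but delicate bookkeeping of the three small parameters (the Gaussian remainder $O(k|y|^3)$, the volume factor $1+O(|y|)$, and the boundary region where $\chi_k$ drops from $1$ to $0$); these are all handled by the rescaling $y=u/\sqrt k$ and the decay of $e^{-2\pi|u|^2}$ exactly as in Lemma~\ref{lemma:7}.
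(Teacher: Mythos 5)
Your proof is correct and follows essentially the same route as the paper: reduce to $\|s_k\|^2_{h^k}$ by controlling the $\|t_k\|_{h^k}$ error (the paper uses a Peter--Paul inequality rather than the triangle inequality, but these are equivalent here since both need precisely $\|t_k\| = o(\|s_k\|)$, i.e.\ $m > n/2+2$), and then evaluate $\|s_k\|^2_{h^k}$ by the Gaussian rescaling $y = u/\sqrt{k}$ exactly as in Lemma~\ref{lemma:7}. The paper simply leaves the Laplace computation to ``the same argument as in Lemma~\ref{lemma:7}''; you have correctly spelled it out.
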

\begin{proof}
For any $\varepsilon > 0$, 
it follows that 
\[
(1-\varepsilon) \|s_{k}\|^2_{h^k} + \left(1-\frac{1}{\varepsilon}\right) \|t_{k}\|_{h^k}^2 \leq \|\alpha_{k}\|^2_{h^k} \leq (1+\varepsilon) \|s_{k}\|^2_{h^k} + \left(1+\frac{1}{\varepsilon}\right) \|t_{k}\|^2_{h^k}.  
\]
If $m > n/2 + 2$, we have $\lim_{k \to \infty}\frac{\|t_k\|^2_{h^k} (2k)^{n/2}}{\mathrm{Vol} (X, \omega)} = 0$ since $\|t_k\|_{h^k}^2 = O(k^{2-m})$.  
Hence it is enough to show $\|s_{k}\|^2_{h^k} \sim \frac{\mathrm{Vol}(X, \omega)}{(2k)^{n/2}}$ ($k \to +\infty$) and we can prove Lemma~\ref{lemma:2} by the same argument as in Lemma~\ref{lemma:7}.  
\end{proof}

\begin{lemma}\label{lemma:3}
We have 
\[
|t_{k}|^2_{h^k} = O(k^{2+2n-m})
\]
uniformly on $X$.  
\end{lemma}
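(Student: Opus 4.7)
The plan is to combine a Hörmander-type pointwise estimate (Lemma~15.1.8 of~\cite{Hor}) with the global $L^2$-bound
\[
\|t_k\|^2_{h^k}\leq \|t_k\|^2_{h'_k}\lesssim \|\nabla'' s_k\|^2_{h'_k,\omega}=O(k^{2-m})
\]
that is built into the construction of $t_k$ and combined with Lemma~\ref{lemma:1}. The overall strategy parallels the proof of Lemma~\ref{lemma:6}, but applied directly to the smooth solution $t_k$ on $M$ rather than to the $\overline\partial$-correction on the complexification $N$.

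I would fix $p\in X$, take a holomorphic coordinate chart on a small neighborhood of $p$ in which $L$ is trivialized by $s_0$, and write $t_k=\tilde t_k s_0^k$ locally, so that $|t_k|^2_{h^k}=|\tilde t_k|^2 e^{-k\varphi_0}$. The identity $\nabla''t_k=\nabla''s_k$ translates into
\[
\overline\partial \tilde t_k = \xi^k\,\overline\partial\chi_k + k\chi_k\xi^{k-1}\,\overline\partial \xi.
\]
For $r:=1/k$ and $k$ large, the Euclidean ball $B(p,r)$ lies inside $\{\chi_k\equiv 1\}$ (since $1/k\ll \log k/(2\sqrt{k})$), so the first summand vanishes on $B(p,r)$; every point of $B(p,r)$ sits within distance $r$ of $X$; and since $d\varphi$ vanishes on $X$, one has $e^{-k\varphi}=O(1)$ on $B(p,r)$. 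Because $\overline\partial \xi$ vanishes to order $m$ on $X$, this yields the pointwise bound
\[
\sup_{B(p,r)}|\overline\partial \tilde t_k|^2 e^{-k\varphi_0}\lesssim k^2 r^{2m}=k^{2-2m}.
\]

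Applying Lemma~15.1.8 of~\cite{Hor} to $\tilde t_k$ on $B(p,r)$ and absorbing the factor $e^{-k\varphi_0(p)}$ into the integrand using the $C^1$-bound on $\varphi_0$ would give
\[
|\tilde t_k(p)|^2 e^{-k\varphi_0(p)}\lesssim r^2\sup_{B(p,r)}|\overline\partial \tilde t_k|^2 e^{-k\varphi_0}+r^{-2n}\int_{B(p,r)} |\tilde t_k|^2 e^{-k\varphi_0}\,d\mu_{\mathrm{Leb}}.
\]
With $r=1/k$, the first term is $O(k^{-2m})$, and the second, using the global $L^2$-bound above, is $O(k^{2n}\cdot k^{2-m})=O(k^{2+2n-m})$. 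Since the constants are uniform in $p\in X$ by compactness, this would establish the claim.

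The only delicate point is the choice of scale $r=r(k)$: it must be small enough that the weight factor $e^{ckr}$ stays bounded and that $B(p,r)$ sits inside $\{\chi_k\equiv 1\}$, yet not so small that the $r^{-2n}$ in Hörmander's inequality overwhelms the $L^2$-gain from Lemma~\ref{lemma:1}. The choice $r=1/k$ balances these two competing scales, and the final exponent $2+2n-m$ is precisely the arithmetic of that balance.
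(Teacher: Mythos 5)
Your proof is correct and follows essentially the same route as the paper: apply Lemma~15.1.8 of \cite{Hor} to the local representative $\tilde t_k$ on the ball $B(p,1/k)$, use $\overline\partial\tilde t_k=\overline\partial(\chi_k\xi^k)$ together with $\overline\partial\xi=O(d_X^m)$, and absorb the $r^{-2n}$ factor against the global bound $\|t_k\|^2_{h^k}=O(k^{2-m})$ from Lemma~\ref{lemma:1}. The only cosmetic difference is that you note $\overline\partial\chi_k\equiv 0$ on $B(p,1/k)$ (since that ball lies in $\{\chi_k\equiv1\}$), whereas the paper instead bounds that contribution by $O(k^{-\infty})$; both yield the same decisive exponent $2+2n-m$.
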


\begin{proof}
Let $k \in \mathbb{N}$ be a sufficiently large number.  
Let $p \in X$ and 
$(V, (z_{1}, \ldots, z_{n}))$ ($p \in V \subset M$) be a holomorphic local coordinate system.  
Let $v_{k} \in C^{\infty}(V)$ such that $t_{k} = v_{k} \xi^k s^k_{0}$.  
By Lemma~15.1.8 of \cite{Hor},  
we have 
\[
|v_{k}\xi^k(p)|^2 \lesssim k^{-2} \sup_{B(p, k^{-1})} |\overline{\partial}_{z}(v_{k}\xi^k)|^2 + k^{2n} \int_{B(p, k^{-1})} |v_{k}\xi^k|^2 d\mu_{\mathrm{Leb}}.  
\]
Here $B(p, r)$ is the Euclidean ball of center $p$ and radius $r$, and $d\mu_{\mathrm{Leb}}$ is the Lebesgue measure on $V$.  
We have $\overline{\partial}_{z} (v_{k}\xi^k) = \overline{\partial}_{z} (\chi_{k}\xi^k)$.  
Let $\eta = \sup_{B(p, k^{-1})} |\varphi_{0}(p)-\varphi_{0}(z)|$.  
We have $\eta = O(k^{-1})$.  
Then 
\begin{align*}
|t_{k}(p)|^2_{h^k} & \lesssim k^{-2} e^{k\eta} \sup_{B(p, k^{-1})} (|\overline{\partial}\chi_{k} \xi^k|^2 + k^2|\chi_{k}\xi^{k-1}\overline{\partial}\xi|^2)e^{-k\varphi_{0}} + k^{2n}e^{k\eta} \int_{B(p, k^{-1})} |t_{k}|^2_{h^k} \omega_n \\
& \lesssim
k^{-2}  \sup_{B(p, k^{-1})} (|\overline{\partial}\chi_{k}|^2 + k^2|\chi_{k}\overline{\partial}\xi|^2)e^{-k\varphi} +  O(k^{2+2n-m}).   
\end{align*}
It follows that 
\[
k^{-2} \sup_{B(p,k^{-1})} |\overline{\partial} \chi_{k}|^2e^{-k\varphi} \lesssim \frac{1}{k(\log k)^2} \sup_{B(p, k^{-1})} \large|\chi'(\frac{\sqrt{k}d_{X}}{\log k})\large|^2 e^{-k\pi (d_{X})^2} 
= O(k^{-\infty}), 
\]
\[
|\chi_{k}\overline{\partial}\xi|^2 e^{-k\varphi} \lesssim \chi_{k} (d_{X})^{2m}e^{-k\pi (d_{X})^2} = O(k^{-m}).  
\]
Since $X$ is compact, the above estimates do not depend on $p$ and the lemma is proved.  
\end{proof}

If we take $m > 2 + 2n$, we have $\lim_{k \to +\infty}|\alpha_{k}|^2_{h^k} = 1$ uniformly on $X$.  
Hence we obtain 
\[
\liminf_{k \to +\infty} \left(\frac{\mathrm{Vol}(X, \omega)}{(2k)^{n/2}} \sup_{f \in H^0_{(2), A}(M, L^k), f \neq 0}\frac{\inf_{x \in X} |f(x)|^2_{h^k}}{\|f\|^2_{h^k}}\right) \geq 1.  
\]
This completes the proof of Theorem~\ref{theorem:3}.

\vspace{5mm}

\par\noindent{\scshape \small
Department of Mathematics, \\
Ochanomizu University,  \\
2-1-1 Otsuka, Bunkyo-ku, Tokyo (Japan) }
\par\noindent{\ttfamily chiba.yusaku@ocha.ac.jp}
\end{document}